\numberwithin{equation}{subsection}
\let\oldsection\section% Store \section
\renewcommand{\section}{% Update \section
  \renewcommand{\theequation}{\thesection.\arabic{equation}}% Update equation number
  \oldsection}% Regular \section
\let\oldsubsection\subsection% Store \subsection
\renewcommand{\subsection}{% Update \subsection
  \renewcommand{\theequation}{\thesubsection.\arabic{equation}}% Update equation number
  \oldsubsection}% Regular \subsection
\title{On Second-Order $L^\infty$ Variational Problems with Lower-Order Terms}
\author{Ben Dutton}
\address{B.D., Department of Mathematical Sciences,
University of Bath, Bath BA2 7AY, UNITED KINGDOM}
\email{bgd27@bath.ac.uk}
\author{Nikos Katzourakis}
\address{N.K. (corresponding author), Department of Mathematics and Statistics, University of Reading, Whiteknights Campus, Pepper Lane, Reading RG6 6AX, UNITED KINGDOM}
\email{n.katzourakis@reading.ac.uk}
\subjclass[2020]{Primary 35J47; 35J60; Secondary 35D30; 35A15}
\keywords{Calculus of variations in $\mathrm L^{\infty}$; higher order problems; local minimisers; absolute minimisers; Euler-Lagrange equations; Aronsson equations; generalised solutions; fully nonlinear equations; Young measures; Baire Category method; convex integration.}
\thanks{\!\!\!\!\!\!\!\texttt{B.D. has been financially supported by an Undergraduate Research Bursary URB-2023-95 from the London Mathematical Society and a scholarship from the EPSRC CDT in Statistical Applied Mathematics at Bath (SAMBa), under the project EP/Y034716/1. \\ N.K. has been partially financially supported through the EPSRC grant EP/X017206/1.}}
\def\R{\mathbb{R}}
\def\B{\mathbb{B}}
\def\H{\mathrm{H}}
\def\E{\mathrm{E}}
\def\J{\mathrm{J}}
\def\A{\mathrm{A}}
\def\D{\mathrm{D}}
\def\X{\mathrm{X}}
\def\bZ{\mathrm{\mathbf{Z}}}
\def\p{\mathrm{p}}
\def\d{\mathrm{d}}
\renewcommand\O{\mathcal{O}}
\renewcommand\phi{\varphi}
\newcommand{\av}{-\hspace{-13pt}\displaystyle\int}
\def\weaklyconv{-\!\!\!\!\!-\!\!\!\!\rightharpoonup}
\def\longconv{-\!\!\!\!-\!\!\!\rightarrow}
\newcommand{\overbar}[1]{\mkern1.5mu\overline{\mkern-1.5mu#1\mkern-1.5mu}\mkern 1.5mu}
\def\weakstar{\overset{*_{\phantom{|}}}{{\smash{\weaklyconv }}\,}}
\newtheorem{thm}{Theorem}[section]
\newtheorem{lem}[thm]{Lemma}
\newtheorem{rmk}[thm]{Remark}
\newtheorem{cor}[thm]{Corollary}
\newtheorem{defn}[thm]{Definition}
\newtheorem{der}[thm]{Derivation}
\begin{document}

\maketitle

\begin{abstract}
In this paper we study $2$nd order $L^\infty$ variational problems, through seeking to minimise a supremal functional involving the Hessian of admissible functions as well as their lower-order terms, considering for fixed $\Omega\subseteq\mathbb R^n$ open, and $\H : \Omega\times\big(\mathbb R \times\mathbb R^n \times \mathbb R^{n^{\otimes2}}_s \big) \to \mathbb R$, the functional
\[
\ \ \ \ \ \E_\infty(u,\mathcal{O}) :=\underset{\mathcal{O}}{\mathrm{ess}\sup}\hspace{1mm}\mathrm H (\cdot,u,\mathrm D u,\mathrm D^2u )  , \ \ u\in W^{2,\infty}(\Omega), \  \mathcal{O} \subseteq \Omega \text{ measurable}.
\] 
Specifically, we establish the existence of minimisers subject to (first-order) Dirichlet data on $\partial \Omega$ under natural assumptions, and, when $n=1$, we also show the existence of absolute minimisers. We further derive a necessary fully nonlinear PDE of third-order which arises as the analogue of the Euler-Lagrange equation for absolute minimisers, and is given by 
  $$
\ \  \mathrm H_{\mathrm X}(\cdot,u,\mathrm D u,\mathrm D^2u): \mathrm D\big(\mathrm H(\cdot,u,\mathrm D u,\mathrm D^2u)\big)\otimes \mathrm D\big(\mathrm H(\cdot,u,\mathrm D u,\mathrm D^2u)\big)=0\ \  \text{ in }\Omega.
  $$ 
We then rigorously derive this PDE from smooth absolute minimisers, and prove the existence of generalised $\mathcal{D}$-solutions to the (first-order) Dirichlet problem on bounded domains. This generalises the key results obtained in \cite{Katzourakis-Pryer-2020} which first studied problems of this type, providing at the same time some simpler streamlined proofs. 
\end{abstract}

%%%%%%%%%%%%%%%%%%%%%%%%%%%%%%%%%%%%%%%%%%%%%%%%%%%%%%%%%%%%%%

\smallskip

\section{Introduction}

Let $\Omega\subseteq\R^n$ be open, $n\in\mathbb{N}$, and $\H : \Omega\times \big( \R\times\R^n\times\R^{n^{\otimes2}}_s \big)\to \R$ a Carath\'eodory function. The central object of study in this paper is the supremal (or $L^\infty$) functional 
\begin{equation}
\label{eqn:fullfunctional}
  \begin{aligned}
\ \ \ \ \E_\infty(u,\O)&:=\, \underset{x\in\O}{\mathrm{ess}\sup}\hspace{1mm} \H\big(x,u(x),\D u(x),\D^2u(x)\big),  \ \ \ \  u\in W^{2,\infty}(\Omega), \ \O\subseteq\Omega,
  \end{aligned}
\end{equation}
where $\O\subseteq\Omega$ is measurable, viewed as an extension of the functional 
\begin{equation}\label{eqn:onlyhessiandependence}
  (u,\O) \, \mapsto \, \underset{x\in\O}{\mathrm{ess}\sup}\hspace{1mm} \H(\D^2u(x))
\end{equation}
to include lower-order terms. The functional \eqref{eqn:onlyhessiandependence} was investigated in \cite{Katzourakis-Pryer-2020}, wherein the second appearing author and Pryer initiated the study of second-order $L^\infty$ variational problems. Herein we show that, if $\H\in C^1(\Omega\times\R\times\R^n\times\R^{n^{\otimes2}}_s)$, the necessary PDE which arises as the analogue of the Euler-Lagrange equation for the supremal functional \eqref{eqn:onlyhessiandependence} is the fully nonlinear PDE of third-order given by
\begin{equation}
\label{eqn:A2inftensor}
  \A^2_\infty u \, :=\, \H_\X(\J^2u):\D\big(\H(\J^2u)\big)\otimes\D\big(\H(\J^2u)\big)= \, 0 \ \ \text{ in }\Omega.
\end{equation}
In index form, \eqref{eqn:A2inftensor} can be rewritten as
\begin{equation}\label{eqn:A2infindex}
  \sum_{i,j=1}^n\H_{\X_{ij}}(\J^2u)\D_i\big(\H(\J^2u)\big)\D_j\big(\H(\J^2u)\big)=0 \ \ \text{ in }\Omega.
\end{equation} 
In the above, $\J^2u$ is the second-order jet of $u$, where in general the $k$-th order jet is the map
\begin{equation}
  \J^k u:=\left(\cdot,u,\D u,\D^2u,\ldots,\D^k u\right),\ \ k\in\mathbb{N},
\end{equation}
with the derivatives (of first, second, and $k$-th order) of $u \in C^k(\Omega)$ denoted by
\begin{equation}
  \begin{aligned}
    \D u=\left(\D_i u\right)_{i=1}^n&\ :\ \Omega\to\R^n,\\
    \D^2u=\left(\D^2_{ij}u\right)_{i,j=1}^n&\ :\ \Omega\to\R^{n^{\otimes2}}_s,\\
    \D^k u=\left(\D^k_{i_1...i_k} u\right)_{i_1,\ldots,i_k=1}^n&\ : \ \Omega\to\R^{n^{\otimes k}}_s,
  \end{aligned}
\end{equation}
and valued into their respective (symmetric) tensor space defined as 
\begin{equation}
  \R^{n^{\otimes k}}_s:=\bigg\{V\in\underbrace{\R^n\otimes\cdots\otimes\R^n}_{k \text{ times}}\ :\ V_{i_1\cdots i_k}=V_{\sigma(i_1\cdots i_k)}, \sigma\text{ permutation on }\{i_1,\ldots,i_k\}\bigg\}.
\end{equation} 
Finally, $A \!:\! B := \mathrm{tr} (A^\top B)$ denotes the Euclidean (Frobenius) inner product in $\R^{n^{\otimes 2}}$, and we symbolise the arguments of the supremand $\H$ throughout as $\H(x,\eta,\p,\X)$, and in particular $\H_\X,\H_\p$, and $\H_\eta$ denote the derivatives of $\H$ with respect to the subscripted arguments. Our general functional space and PDE notation is generally standard, as e.g.\ in \cite{Fonseca-Leoni-2007}, or self-explanatory.

The aim of this paper is to study variational problems for the general second-order supremal functional \eqref{eqn:fullfunctional}, generalised solutions to the corresponding third-order fully nonlinear PDE \eqref{eqn:A2inftensor}, as well as the deeper connection between these two objects.

Supremal variational problems were first considered in the 1960s in the work of Aronsson, the pioneer of this field (see \cite{Aronsson-1965, Aronsson-1966, Aronsson-1967, Aronsson-1984}, and \cite{Aronsson-1986}). One of the advantages of minimising the $L^\infty$-norm is that it provides a uniformly small pointwise energy, whereas minimising an integral norm may allow for large spikes of the maximum pointwise energy, though the area under the graph might be small. This former approach offers better models in applications when this difference is relevant, but the theory is interesting from a pure mathematical standpoint nonetheless. The (scalar) first-order theory of $L^\infty$ variational problems, in which the supremand depends on the gradient of admissible real-valued functions and possibly lower-order terms, is well established (see e.g.\,\cite{BJW-2001-Euler} and \cite{BJW-2001-Lsc} from the early 2000s). Without any attempt to be exhaustive, we refer to the following interesting relevant works: \cite{AP, KZ, MWZ, PP, PZ, RZ, Ribeiro-Zappale-2024}. Most notably, we observe in \cite{Bhattacharya-DiBenedetto-Manfredi-1989} a correspondence between absolute minimisers of first-order supremal functionals and viscosity solutions of the associated necessary PDE. 

The vectorial first-order case with $u:\R^n\supseteq\Omega\to\R^N$, $N\in\mathbb{N}$, proved substantially harder and is still under development (see e.g.\,\cite{Katzourakis-Abugirda-2016, Katzourakis-Ayanbayev-2017, Katzourakis-2015-AbsMin, Katzourakis-Shaw-2018} from the 2010s). All of this leads to the consideration of the (scalar) second-order case as a natural next step. An interesting conclusion of \cite{Katzourakis-Pryer-2020} is that the second-order case does not follow by analogy from the first-order case. Recent literature on the second-order case includes \cite{Clark-Katzourakis-2024, Katzourakis-Moser-2023, Katzourakis-Moser-2024-1Currents, Katzourakis-Parini-2017}, and \cite{Katzourakis-Moser-2024-LocalMinimisers}.

The lack of G\^ateaux differentiability of $\E_\infty$ (which is true regardless of the smoothness or convexity of the supremand $\H$) is a fundamental difficulty of the theory. Danskin's theorem on differentiating maxima \cite{Danskin-1967} could apply, but it requires too much regularity and that the $\mathrm{argmax}$ set be a singleton, which is unrealistic. This challenge is usually overcome using the following method. A central technique in exploring $L^\infty$ variational problems is to approximate the functional $\E_\infty$ using $L^p$ functionals (with $p\ge1$) of the form
\begin{equation}
\label{1.8}
  (u,\O) \mapsto \bigg(\av_{\O} \big| \H(x,u(x),\D u(x),\D^2u(x)) \big|^p\hspace{1mm}\d x\bigg)^{\frac1p},
\end{equation} 
and then pass to the limit as $p\to\infty$ to explore $L^\infty$ phenomena (temporarily ignoring the fact that $\H$ in \eqref{eqn:fullfunctional} might attain negative values). This is based on the observation that the $L^p$-norm tends to the $L^\infty$-norm as $p\to\infty$, provided the integrand is in the space $L^\infty(\Omega)$.

Let us define below the central notions of minimality which we will utilise in this paper.

\begin{defn}
[Global and absolute minimisers]
\label{defn:globalandabsoluteminimisers}
A function $u\in W^{2,\infty}_g(\Omega):=g+W^{2,\infty}_0(\Omega)$ is a global minimiser of \eqref{eqn:fullfunctional} on $\Omega$, if the following inequality holds:
\begin{equation}
  \E_\infty(u,\Omega)\le\E_\infty(u+\phi,\Omega), \ \forall\hspace{1mm}\phi\in W^{2,\infty}_0(\Omega). 
\end{equation}
A function $u\in W^{2,\infty}(\Omega)$ is an absolute minimiser of \eqref{eqn:fullfunctional} on $\Omega$ if the following inequality holds:
\begin{equation}
  \E_\infty(u,\Omega')\le\E_\infty(u+\phi,\Omega'),\ \forall\hspace{1mm}\Omega'\Subset\Omega,\ \forall\hspace{1mm}\phi\in W^{2,\infty}_0(\Omega').
\end{equation}
\end{defn}

Another major hindrance in $L^\infty$ is that minimisers are not automatically optimal on subdomains, in contrast to the integral calculus of variations. As such, minimisers need to be assumed to minimise on subdomains from the outset, which is the justification of the notion of absolute minimisers. Moreover, the PDEs which arise in the second-order case are fully nonlinear, of third-order, and not elliptic, necessitating the development of new forms of generalised solutions. This is a non-trivial task, as standard approaches fail, and in particular neither viscosity nor weak solutions apply. As shown in \cite{Katzourakis-Pryer-2020}, in general, solutions to \eqref{eqn:A2inftensor} cannot be in $C^3(\Omega)$, even when $n=1$, there are no lower-order terms, and we restrict our attention to (absolutely) minimising solutions of $u \mapsto \|u''\|_{L^\infty(\Omega)}$ (see section \ref{section:existenceofDsolutions} for more details). 

We therefore need to resort to appropriately defined generalised solutions to \eqref{eqn:A2inftensor}. To this aim, $\mathcal{D}$-solutions for the Dirichlet problem for $\A^2_\infty u=0$ are considered (see also \cite{Katzourakis-2015-AbsMin, Katzourakis-2017-Dsols,Croce-Katzourakis-Pisante-2017}). This approach was first introduced in \cite{Katzourakis-2017-Dsols} as an independent general framework to treat fully nonlinear systems of PDEs of any order, and has been subsequently utilised in the $L^\infty$ context in a number of works (see e.g.\ \cite{Katzourakis-Ayanbayev-2017, Katzourakis-Moser-2019}). The theory of $\mathcal{D}$-solutions uses Young measures to allow for a merely measurable map to be interpreted as a solution to a PDE, by exploring weak* limits of difference quotients, the latter viewed as probability-valued maps in appropriate compactifications.

The investigation of $\mathcal{D}$-solutions to $\A^2_\infty u=0$ is necessary firstly due to the nature of the operator $\A^2_\infty$ - being fully nonlinear and not (degenerate) elliptic, it is not amenable to the extant standard notions of generalised solutions. Secondly, the natural regularity class of this second-order variational problem is $W^{2,\infty}(\Omega)$, while the operator $\A^2_\infty$ is of third-order. This is easily seen when considering the expanded form of the equation \eqref{eqn:A2inftensor}, which is expressed in the operator 
\[
\mathcal{A}_\infty\ : \ \ \Omega\times\R\times\R^n\times\R^{n^{\otimes2}}_s\times\R^{n^{\otimes3}}_s\longrightarrow \R,
\]
defined as
\begin{equation}
\label{Aronsson-operator}
  \begin{aligned}
  \mathcal{A}_\infty(x,\eta,\p,\X,\bZ):=\sum_{i,j,k,l,p,q=1} & \H_{\X_{ij}}(x,\eta,\p,\X)\Big(\H_{x_i}(x,\eta,\p,\X)+\H_{\eta}(x,\eta,\p,\X)\p_i 
  \\
  & \ \ \ \ \ \ \ +\H_{\p_k}(x,\eta,\p,\X)\X_{ki} +\H_{\X_{kl}}(x,\eta,\p,\X)\bZ_{ikl}\Big) \cdot
  \\
 & \cdot \Big(\H_{x_j}(x,\eta,\p,\X)+\H_{\eta}(x,\eta,\p,\X)\p_j 
 \\
 & \ \ \ \ \ \ \ +\H_{\p_p}(x,\eta,\p,\X)\X_{pj} +\H_{\X_{pq}}(x,\eta,\p,\X)\bZ_{jpq}\Big).
  \end{aligned}
\end{equation}
For brevity, we will symbolise $\mathcal{A}_\infty$ in compact tensor notation as
\[
  \begin{aligned}
  \mathcal{A}_\infty(x,\eta,\p,\X,\bZ)= \H_\X (x,\eta,\p,\X) \! : \!\Big(&\H_{x_i}(x,\eta,\p,\X)+\H_\eta(x,\eta,\p,\X)\p\\&+\H_\p(x,\eta,\p,\X)\X+\H_\X(x,\eta,\p,\X)\!:\!\bZ\Big)^{\otimes2},
  \end{aligned}
\]
where the exact meaning of the above is given by \eqref{Aronsson-operator}. Then, the contracted form of equation \eqref{eqn:A2inftensor} can be rewritten in expanded form as
\begin{equation} \label{expanded-PDE}
\mathcal{A}_\infty(\J^2u,\D^3u)=0 \ \ \text{ in }\Omega
\end{equation}
(and, equivalently, also as $\mathcal{A}_\infty(\J^3u)=0$). Even though $\mathcal{A}_\infty$ only makes sense, strictly speaking, when $u\in C^3(\Omega)$, it is the appropriate form in order to define and study twice weakly differentiable (generalised) $\mathcal{D}$-solutions to the Dirichlet problem for $\mathcal{A}_\infty(\J^2u,\D^3u)=0$ in $\Omega$.

Let us now outline the content of this paper. This introduction is followed by section \ref{section:preliminaries}, in which we discuss some basic elements of the theory of $\mathcal{D}$-solutions for fully nonlinear third-order PDE, to the extent they are utilised in the present paper. 

In section \ref{section:existenceofminimisers} we present the existence of global and absolute minimisers of $\E_\infty$ on $\Omega$. Existence of global minimisers is established in Theorem \eqref{theorem:existenceofglobalminimisers}, following the structure of the similar proofs in \cite{Katzourakis-Pryer-2020} and \cite{BJW-2001-Euler}, using $L^p$ approximations and Young measures. Even though the direct method applied straight to the supremal functional would work (under natural assumptions that render it sequentially weakly* lower semicontinuous and coercive), it provides \enquote{too many minimisers}, whereas the method of $L^p$ approximations seems to \enquote{select} the preferable one, and we see in Theorem \eqref{theorem:existenceofabsoluteminimisers} that this is indeed the case, at least when $n=1$. The assumptions on the supremand $\H$ are quite weak, namely a growth bound for coercivity and level-convexity in the final argument (convex sublevel sets). See the recent paper \cite{Ribeiro-Zappale-2024} for a thorough contemporary review of notions of convexity in $L^\infty$ variational problems. 

In section \ref{section:variationalcharacterisation} we (formally) derive the analogue of the Euler-Lagrange equation for $\E_\infty$, which is the third-order PDE $\A^2_\infty u=0$ defined in \eqref{eqn:A2inftensor} (rewritten in \eqref{eqn:A2infindex} and \eqref{expanded-PDE}), and is obtained by \enquote{taking the limit} of the Euler-Lagrange equation for the approximating $L^p$ functionals. Let us note that in \cite[Remark 4.9]{Aronsson-Barron} this PDE was also previously derived, but it was not studied at all, and the derivation itself did not apply as it stands when $\H$ attained negative values, as in our case. Subsequently, we consider the satisfaction of this PDE by absolute minimisers of $\E_\infty$, at least under additional regularity assumptions for $C^3$ absolute minimisers. Note that this won't necessarily hold for global minimisers as they lack the locality required. The method we follow is new and in particular offers a simpler proof of (\cite{Katzourakis-Pryer-2020}, Theorem 14(A)), even when there are no lower-order terms.

In section \ref{section:existenceofDsolutions} we focus our attention on the PDE \eqref{eqn:A2inftensor} in its own right. Without imposing any convexity assumptions on $\H$, we first establish the existence of solutions in $W^{2,\infty}(\Omega)$ to the (first-order) Dirichlet problem for the auxiliary implicit PDE given by
\begin{equation}
\label{auxiliary-problem}
  \H(\J^2u)=C\ \text{ a.e. in }\Omega ; \ \ u=g, \ \D u= \D g \text{ on } \partial \Omega,
\end{equation}
for some compatible $C>0$ and any $g\in W^{2,\infty}(\Omega)$. The solvability of \eqref{auxiliary-problem}, which is of independent mathematical interest, is based on an application of the Baire Category method (see e.g.\ \cite{Dacorogna-Marcellini-1999}). The key observation is that satisfaction of \eqref{auxiliary-problem} implies $\D(\H(\J^2u))=0$ a.e. on $\Omega$, and the latter appears as a multiplicative term in \eqref{eqn:A2inftensor}. Using this observation and some basic technical machinery from \cite{Katzourakis-2017-Dsols}, the existence of $\mathcal{D}$-solutions in $W_g^{2,\infty}(\Omega)$ for the Dirichlet problem for \eqref{eqn:A2inftensor} on $\Omega$ is then proved. In particular, we obtain a more concise streamlined method of proof in the case of no lower-order terms, in comparison to the first-principles approach used in \cite{Katzourakis-Pryer-2020}. Let us finally close this introduction by noting that further results for higher-order supremal variational problems through different means and with a different focus which do not involve the PDE \eqref{eqn:A2inftensor} have recently been obtained in \cite{Katzourakis-Moser-2019, Katzourakis-Moser-2023, Katzourakis-Moser-2024-LocalMinimisers}.

%%%%%%%%%%%%%%%%%%%%%%%%%%%%%%%%%%%%%%%%%%%%%%%%%%%%%%%%%%%%%%

\smallskip

\section{Preliminaries}\label{section:preliminaries}

In this section we recall, for the convenience of the reader, some well-known results on Young measures. These are required in order to define $\mathcal{D}$-solutions, as well as for some of our proofs involving non-convex variational problems. Young measures valued into the Euclidean space $\R^{n^{\otimes2}}_s$ are used in the proof of existence of global minimisers, and Young measures valued into the compactified space $\overbar{\R}^{n^{\otimes3}}_s$ are used when looking at $\mathcal{D}$-solutions to capture the limiting behaviour of possibly escaping difference quotients. The latter space is defined as the 1-point compactification of $\R^{n^{\otimes3}}_s$, through $\overbar{\R}^{n^{\otimes 3}}_s:=\R^{n^{\otimes 3}}_s\cup\{\infty\}$. We refer e.g.\,to \cite{Fonseca-Leoni-2007} for Euclidean Young measures and \cite{Florescu-Godet-Thobie-2012} for Young measures valued into compact spaces. 

Recall the dual of $C(\overbar{\R}^{n^{\otimes3}}_s)$ is the space of signed Radon measures $\mathcal{M}(\overbar{\R}^{n^{\otimes3}}_s)$.
\begin{defn}
[Young measures]
\label{defn:youngmeasures}
The set $\mathscr{Y}(\Omega,\overbar{\R}^{n^{\otimes3}}_s)$ of Young measures valued into the compact space $\overbar{\R}^{n^{\otimes3}}_s$ consists of weakly* measurable maps $x\mapsto\vartheta(x)$ for a.e. $x\in\Omega$, where $\vartheta$ is a probability measure. This set $\mathscr{Y}(\Omega,\overbar{\R}^{n^{\otimes3}}_s)$ is a subset of the unit sphere of $L^\infty_{w^*}(\Omega,\mathcal{M}(\overbar{\R}^{n^{\otimes3}}_s))$, which in turn is the dual of $L^1(\Omega,C(\overbar{\R}^{n^{\otimes3}}_s))$. The associated duality pairing is:
\begin{equation}
  \braket{\vartheta,\Phi}:=\int_\Omega\int_{\overbar{\R}^{n^{\otimes3}}_s}\Phi(x,\bZ)\hspace{1mm}\d[\vartheta(x)](\bZ)\hspace{1mm}\d x,
\end{equation} 
where $\vartheta\in L^\infty_{w^*}(\Omega,\mathcal{M}(\overbar{\R}^{n^{\otimes3}}_s))$ and $\Phi\in L^1(\Omega,C(\overbar{\R}^{n^{\otimes3}}_s))$. 
\end{defn}
Every measurable map $v:\Omega\to\overbar{\R}^{n^{\otimes3}}_s$ generates a Young measure $\delta_v\in\mathscr{Y}(\Omega,\overbar{\R}^{n^{\otimes3}}_s)$ given by $\delta_{v(x)}=:\delta_v(x)$. Let $(v_i)_1^\infty$, $v_i:\Omega\to\R^{n^{\otimes3}}_s$, be a sequence of measurable functions. We have $\delta_{v_i}\weakstar\delta_{v_\infty}$ in $\mathscr{Y}(\Omega,\overbar{\R}^{n^{\otimes3}}_s)$ if and only if $v_i(x)\longconv v_\infty(x)$ for a.e. $x\in\Omega$ (possibly along a subsequence), for some $v_\infty$ measurable. Furthermore, the set $\mathscr{Y}(\Omega,\overbar{\R}^{n^{\otimes3}}_s)$ is sequentially weakly* compact and convex. This means, even without any pointwise convergence assumption, there always exists some $\vartheta\in\mathscr{Y}(\Omega,\overbar{\R}^{n^{\otimes3}}_s)$ such that $\delta_{v_i}\weakstar\vartheta$ along a subsequence as $i\to\infty$. We can now define diffuse derivatives as an essential object in defining in $\mathcal{D}$-solutions.
\begin{defn}[Difference quotients and diffuse derivatives]\label{defn:diffusederivatives}
The difference quotient of a function $v\in L^1_{\mathrm{\mathrm{loc}}}(\Omega)$, for $h\neq 0$, is given by 
\begin{equation}
  \begin{aligned}
  \D^{1,h}_k v(x)&:=\frac{1}{h}\big(v(x+he^k)-v(x)\big), \\
  \D^{1,h}v&:=\big(\D^{1,h}_1v,\ldots,\D^{1,h}_nv\big),
  \end{aligned}
\end{equation}
where $e^k$ is the $k$-th standard basis vector in $\R^n$. Consider now $u\in W^{2,1}_{\mathrm{loc}}(\Omega)$, and take $v:=\D^2u:\Omega\to\R^{n^{\otimes2}}_s$. The diffuse third-order derivatives $\mathcal{D}^3u\in\mathscr{Y}(\Omega,\overbar{\R}^{n^{\otimes3}}_s)$ are obtained as the weak* subsequential limits of the difference quotients $\D^{1,h}\D^2u$ in $\mathscr{Y}(\Omega,\overbar{\R}^{n^{\otimes3}}_s)$ along infinitesimal sequences $(h_m)_1^\infty$, which is to say $\delta_{\D^{1,h_m}\D^2u}\weakstar\mathcal{D}^3u$ in $\mathscr{Y}(\Omega,\overbar{\R}^{n^{\otimes3}}_s)$, as $m\to\infty$.
\end{defn}

The sequential weak* compactness of the space of Young measures implies that any function has diffuse derivatives of all orders. We can now make rigorous the notion of twice (weakly) differentiable $\mathcal{D}$-solutions of the third-order PDE $\A^2_\infty u=0$ in its expanded form $\mathcal{A}_\infty(\J^2u,\D^3u)=0$. For technical reasons dictated by the proof of Theorem \ref{theorem:existenceofDsolutions}, we actually state it more generally for arbitrary fully nonlinear third-order systems.

\begin{defn}
[Twice weakly differentiable $\mathcal{D}$-solutions of a third-order PDE]
\label{D-solutions}
Let $N\in \mathbb N$, and let $\mathcal{F}:\Omega\times \R\times \R\times \R^{n^{\otimes2}}_s \! \times \R^{n^{\otimes3}}_s\to\R^N$ be a Borel measurable mapping. We say that $u\in W^{2,1}_{\mathrm{loc}}(\Omega)$ is a $\mathcal{D}$-solution of the PDE system 
\[
\mathcal{F}(\J^2u,\D^3u)=0 \ \ \text{ in }\Omega,
\]
if for any diffuse third-order derivative $\mathcal{D}^3u\in\mathscr{Y}(\Omega,\overbar{\R}^{n^{\otimes3}}_s)$, we have that 
  \[
    \int_{\overbar{\R}^{n^{\otimes3}}_s}\Phi(\bZ)\mathcal{F}(\J^2u(x),\bZ)\hspace{1mm}\d[\mathcal{D}^3u(x)](\bZ)=0,
  \]
  for a.e. $x\in\Omega$, and for any test function $\Phi\in C_c({\R}^{n^{\otimes3}}_s)$.
\end{defn}
It can easily be seen that $\mathcal{D}$-solutions are compatible with other pointwise notions of solution. In particular, if the Hessian of the $\mathcal{D}$-solution is differentiable in measure (or is $C^3$), then we have a strong a.e.\ solution on $\Omega$ (or classical). Conversely, any thrice in measure differentiable strong solution is a $\mathcal{D}$-solution to the same equation. 

We close this section with the following equivalent formulation of Definition \ref{D-solutions} that will be utilised in the proof of  Theorem \ref{theorem:existenceofDsolutions}.

\begin{rmk} \label{remark2.4} In view of \cite[Proposition 21]{Katzourakis-2017-Dsols}, we have the following restatement of Definition \ref{D-solutions}: for any diffuse third-order derivative $\mathcal{D}^3u\in\mathscr{Y}(\Omega,\overbar{\R}^{n^{\otimes3}}_s)$, we have
 \[
 \sup_{\bZ \in \mathrm{supp}_*(\mathcal{D}^3u(x))}  \big| \mathcal{F}(\J^2u(x),\bZ) \big| =0, \ \ \text{ a.e. }x\in\Omega,
  \]
where ``\,$\mathrm{supp}_*$\!" is the reduced support of the Young measure away from the point at infinity of the compactification of $\bar{\R}^{n^{\otimes3}}_s$:
\[
\mathrm{supp}_*(\mathcal{D}^3u(x)) : = \mathrm{supp}(\mathcal{D}^3u(x)) \cap {\R}^{n^{\otimes3}}_s.
\]
\end{rmk}
For more details on the properties and structure of $\mathcal{D}$-solutions we refer to \cite{Katzourakis-2017-Dsols}.

%%%%%%%%%%%%%%%%%%%%%%%%%%%%%%%%%%%%%%%%%%%%%%%%%%%%%%%%%%%%%%

\smallskip

\section{Existence of Minimisers}\label{section:existenceofminimisers}

In this section we present a result on the existence of global minimisers of the functional \eqref{eqn:fullfunctional} given arbitrary (first-order) Dirichlet boundary conditions on a fixed open and bounded domain $\Omega\subseteq\R^n$. Our assumptions require only a coercivity lower-bound and level-convexity of $\H$ in the final (Hessian) argument, which is to say that the sublevel sets be convex. Even though in itself this can be seen as a simple application of the direct method by utilising the weak* lower-semicontinuity of $\E_\infty$ in $W^{2,\infty}(\Omega)$, instead, following \cite{Katzourakis-Pryer-2020}, we adopt a more complicated approach and construct a ``good" minimiser by using the method of $L^p$ approximations as $p\to\infty$. The reason for this is that, as it is well known, in general $L^\infty$ minimisers are non-unique, but the method of $L^p$ approximations allows to select ``the best one" which satisfies additional properties and can usually be shown to be absolute. This is indeed the case in our work as well, at least when $n=1$.

We note, however, that level-convexity is a weaker condition than Morrey's quasiconvexity, which is necessary to ascertain that the approximating sequences of $L^p$ functionals attain their infima for all $p$ finite (up to extra growth bounds on the integrand). Thus, inspired by \cite{BJW-2001-Euler}, we pass through approximate minimisers in the $L^p$ approximating sequence utilising Young measures, and invoking the extended Jensen inequality \eqref{eqn:extendedjenseninequality}. Since this only holds in the Hessian argument, some care has to be taken to account for the inclusion of lower-order terms.
A (diagonal) lower semicontinuity of $\E_\infty$, which is useful for Theorem \ref{theorem:existenceofabsoluteminimisers}, is then shown in a separate result. We then turn to the existence of absolute minimisers, and prove that, if $n=1$, the minimiser obtained through $L^p$ approximations is in fact an absolute minimiser. 

We now present the main result of the section. Since $\H$ will not be assumed to be non-negative, the integral functional \eqref{1.8} cannot be used directly to approximate \eqref{eqn:fullfunctional} as $p\to\infty$. Instead, we will use the modified approximations
\begin{equation}
\label{3.1}
  \E_p(u,\O):=\bigg(\av_{\O} \Big(M + \H(x,u(x),\D u(x),\D^2u(x)) \Big)^p\hspace{1mm}\d x\bigg)^{\frac1p} -M,
\end{equation} 
which works as long as we have a lower bound for the form $\H \geq -m$, for some $m>0$, by choosing any fixed $M>m$.

%%%%%%%%%%%%%%%%%%%%%%%%%%%%%%%%%%%%%%%%%%%%%%%%%%%%%%%%%%%%%%

\begin{thm}[Existence of global minimisers]\label{theorem:existenceofglobalminimisers}
Let $\Omega\Subset\R^n$ be open, bounded with Lipschitz boundary $\partial \Omega$, and $n\in\mathbb{N}$. Let $\H : \Omega\times \big(\R\times\R^n\times\R^{n^{\otimes2}}_s) \to \R$ be a Carath\'eodory function, which is bounded below and level-convex in its final argument, namely for all $(x,\eta,\mathrm p)\in\Omega\times\R\times\R^n$ and $t\in \R$, the set $\{\H(x,\eta,\mathrm p,\cdot)\le t\}$ is convex. We further suppose that $\H$ satisfies the following coercivity condition: there exist $C_1,C_2>0$ and $0\le s, t< 1$ such that
\begin{equation}
\label{coercivity}
\H(x,\eta,\mathrm p, \X)\ge C_1\lvert \X\rvert - C_2(1+\lvert \eta\rvert^s+\lvert \mathrm p\rvert^t)
\end{equation}
for a.e.\ $x\in \Omega$ and all $(\eta,\mathrm p, \X)\in  \R \times \R^n \times \R^{n^{\otimes2}}_s$. We also assume the following continuity assumption holds true: for any $R>0$, there exists $\ell=\ell(R)>1$ and an increasing modulus of continuity $\omega_R \in C[0,\infty)$ with $\omega_R(0)=0$, such that
\begin{equation}
\label{continuity}
\Big|\H(x,\eta',\mathrm p', \X) - \H(x,\eta'',\mathrm p'', \X) \Big| \leq
\omega_{R}\Big(|\eta'-\eta''|+|\mathrm p' - \mathrm p''| \Big)(1+|\X|^\ell)
\end{equation}
for a.e.\ $x\in \Omega$, any $\eta',\eta'' \in \mathbb (-R,R)$, any $\mathrm p',\mathrm p''  \in \mathbb B_R(0)$, and any $\X \in \R^{n^{\otimes2}}_s$.

Then, for any boundary data $g\in W^{2,\infty}(\Omega)$, there exists a global minimiser $u_\infty\in W^{2,\infty}_g(\Omega)$ of the functional \eqref{eqn:fullfunctional} on $\Omega$. Furthermore, for any fixed $q \in (1,\infty)$, $u_\infty$ is the subsequential weak $W^{2,q}(\Omega)$-limit of approximate minimisers of the functionals $\{\E_p:p\in(1,\infty)\}$ as $p\to \infty$.
\end{thm}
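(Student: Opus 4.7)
The plan is to realise $u_\infty$ as a subsequential weak $W^{2,q}$-limit of $L^p$-approximate minimisers of $\E_p$, and then to verify its minimality via a Young-measure argument combined with the extended Jensen inequality, which compensates for the lack of quasiconvexity of $\H$ in the Hessian slot. For each large $p$, I would pick $u_p\in W^{2,p}_g(\Omega)$ satisfying $\E_p(u_p)\le \inf_{W^{2,p}_g}\E_p+1/p$; working with approximate minimisers sidesteps the fact that level-convexity alone does not guarantee attainment of the $L^p$-infimum. Since $g\in W^{2,\infty}_g(\Omega)$ is admissible, $\E_p(g)\le \E_\infty(g)$, and hence $\E_p(u_p)\le \E_\infty(g)+1$ uniformly in $p$. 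Combining this with \eqref{coercivity}, the Poincar\'e inequality on $W^{2,p}_0(\Omega)$ applied to $u_p-g$, and Young's inequality to absorb the sublinear contributions from $|u_p|^s$ and $|\D u_p|^t$ (using $s,t<1$), one extracts a uniform bound $\|u_p\|_{W^{2,p}(\Omega)}\le C$. A standard diagonal extraction then yields $u_\infty\in\bigcap_{q<\infty}W^{2,q}(\Omega)$ with $u_p\rightharpoonup u_\infty$ in $W^{2,q}(\Omega)$ for every finite $q$; taking $q>n$ and invoking the compact embedding $W^{2,q}(\Omega)\hookrightarrow C^1(\overline{\Omega})$ gives $u_p\to u_\infty$ in $C^1(\overline{\Omega})$, so the first-order Dirichlet trace of $g$ is preserved (and $u_\infty\in W^{2,\infty}(\Omega)$ will follow post-hoc from coercivity). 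The easy half of minimality is then immediate: for any $v\in W^{2,\infty}_g(\Omega)$, approximate minimality together with $\|f\|_{L^p(\Omega,\d x/|\Omega|)}\to\|f\|_{L^\infty}$ for bounded $f$ gives $\limsup_p\E_p(u_p)\le\limsup_p\bigl(\E_p(v)+1/p\bigr)=\E_\infty(v)$.

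The core of the argument is the matching liminf inequality $\liminf_p\E_p(u_p)\ge \E_\infty(u_\infty)$. Along a further subsequence, $\D^2 u_p$ generates a Young measure $\{\nu_x\}_{x\in\Omega}\in\mathscr{Y}(\Omega,\R^{n^{\otimes2}}_s)$ whose barycenter is $\D^2 u_\infty(x)$ for a.e.\ $x$. For each fixed $q\in(1,\infty)$ and all $p\ge q$, Jensen's inequality applied to the averaged $L^p$-norm yields
\[
\E_p(u_p)+M \,\ge\, \bigg(\av_\Omega\bigl(M+\H(x,u_p,\D u_p,\D^2 u_p)\bigr)^{q}\,\d x\bigg)^{1/q}.
\]
The $C^1$-convergence of $(u_p,\D u_p)$ combined with \eqref{continuity} lets one swap $\H(x,u_p,\D u_p,\cdot)$ for $\H(x,u_\infty,\D u_\infty,\cdot)$ up to an error of the form $\omega_R(o(1))(1+|\D^2 u_p|^\ell)$, which vanishes in $L^{p/\ell}$ once $p\gg\ell$, thanks to the uniform bound on $\|\D^2 u_p\|_{L^p}$. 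The fundamental theorem on Young measures, applied to the Carath\'eodory integrand $(M+\H(x,u_\infty,\D u_\infty,\cdot))^q$ (which is $L^{p/q}$-equi-integrable in $p$), then produces
\[
\liminf_{p\to\infty}\E_p(u_p)+M \,\ge\, \bigg(\av_\Omega \int_{\R^{n^{\otimes2}}_s}\bigl(M+\H(x,u_\infty,\D u_\infty,\X)\bigr)^{q}\,\d[\nu_x](\X)\,\d x\bigg)^{1/q}.
\]
Sending $q\to\infty$, the nested $L^q$-averages over $\Omega$ and over $\nu_x$ converge to the nested essential suprema. Finally, the level-convexity of $\H(x,u_\infty,\D u_\infty,\cdot)$ and the extended Jensen inequality dominate this $\nu_x$-essential supremum from below by $\H$ evaluated at the barycenter $\D^2 u_\infty(x)$, delivering $\liminf_p\E_p(u_p)\ge\E_\infty(u_\infty)$. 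Chaining with the easy half proves global minimality; coercivity together with $\E_\infty(u_\infty)<\infty$ and $u_\infty\in W^{1,\infty}(\Omega)$ then forces $\D^2 u_\infty\in L^\infty(\Omega)$.

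The principal obstacle is the interplay between the non-quasiconvex (merely level-convex) dependence of $\H$ on the Hessian slot---for which only the extended Jensen inequality is available, compelling a Young-measure detour in place of direct weak lower semicontinuity---and the simultaneously varying lower-order slots $(u_p,\D u_p)$ that appear inside $\H$. Hypothesis \eqref{continuity}, with its growth factor $(1+|\X|^\ell)$, is precisely what decouples these two sources of variation, but only when compensated by the uniform $L^p$-bound on $\D^2 u_p$; without either ingredient, substituting the weak limits into the lower-order slots would be invalid. Everything else---existence and barycentric identification of $\nu_x$, preservation of Dirichlet data, and the upgrade from $W^{2,q}$ to $W^{2,\infty}$---is routine once these two ingredients are in place.
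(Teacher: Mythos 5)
Your proposal is correct and follows essentially the same route as the paper: $L^p$ approximate minimisers, uniform $W^{2,k}$ bounds from the coercivity hypothesis plus Poincar\'e (your use of Young's inequality to absorb the sublinear terms replaces the paper's tangent-line argument, a cosmetic difference), Young measures generated by $\D^2 u_p$ with barycentre $\D^2 u_\infty$, the extended Jensen inequality for level-convexity, and the continuity hypothesis \eqref{continuity} to decouple the lower-order slots via the $C^1$-convergence. The only organisational differences are the direction in which you run the chain of inequalities and deferring $\D^2 u_\infty\in L^\infty(\Omega)$ to a post-hoc coercivity argument, both of which are sound.
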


%%%%%%%%%%%%%%%%%%%%%%%%%%%%%%%%%%%%%%%%%%%%%%%%%%%%%%%%%%%
\begin{rmk}[More general supremands] We note that, even though the assumptions are rather natural and non-restrictive, Theorem \ref{theorem:existenceofglobalminimisers} applies to a much wider class of supremands $\H$ than those it appears to. Indeed, Theorem \ref{theorem:existenceofglobalminimisers} applies also to 
\[
\ \ \ \ \ \ \mathrm F_\infty (u,\O) :=\, \underset{\O}{\mathrm{ess}\sup}\hspace{1mm} \Phi\big(\H(\J^2u)\big),  \ \ \ \  u\in W^{2,\infty}(\Omega), \ \O\subseteq\Omega \text{ measurable},
\]
where $\Phi : \R \to \R$ is \emph{any} strictly increasing lower semicontinuous function, (possibly discontinuous, non-differentiable, and of arbitrary growth). This is based on the observation that $\Phi$ commutes with the (essential) supremum, therefore $\E_\infty$ and $\mathrm F_\infty = \Phi \circ \E_\infty$ have the same sets of minimisers (and absolute minimisers).
\end{rmk}

A key ingredient for the proof is the extended Jensen inequality (see \cite{BJW-2001-Euler}), which, adapted to our setting states that for any level-convex continuous function $F:\R^{n^{\otimes2}}_s\to\R$, and any probability measure $\vartheta \in \mathcal{P}(\R^{n^{\otimes2}}_s)$, we have
\begin{equation}
\label{eqn:extendedjenseninequality}
F\bigg(\int_{\R^{n^{\otimes2}}_s}\X\hspace{1mm}\d\vartheta(\X)\bigg)
\leq
\vartheta-\underset{\X\in\R^{n^{\otimes2}}_s}{\mathrm{ess}\sup}\hspace{1mm}F(\X).
\end{equation}

%%%%%%%%%%%%%%%%%%%%%%%%%%%%%%%%%%%%%%%%%%%%%%%%%%%%%%%%%%%

\begin{proof}[\rm \textbf{Proof of Theorem} \ref{theorem:existenceofglobalminimisers}]\label{proof:existenceofglobalminimisers}
Fix $p>1+1/s+1/t$. Let $(u_{p,i})_{i=1}^\infty\subseteq W^{2,\infty}_g(\Omega)$ be a minimising sequence of the integral functional $\E_p(\cdot,\Omega)$ satisfying
\[
  \E_p(u_{p,i},\Omega)\to\inf\big\{\E_p(v,\Omega):v\in W^{2,\infty}_g(\Omega) \big\}
\]
as $i\to\infty$. Since minimisers of $\E_p$ may not exist, we consider instead approximate minimisers by selecting $i=i(p)$ large enough such that 
\[
  \E_p(u_p,\Omega)\le 2^{-p}+\inf\big\{\E_p(v,\Omega):v\in W^{2,\infty}_g(\Omega)\big\},
\]
where $u_p:=u_{p,i(p)}$. By applying the H\"older inequality, we have 
\begin{equation}\label{eqn:Equpperboundwithp}
  \E_q(u_p,\Omega)\le2^{-p}+\E_\infty(\psi,\Omega),
\end{equation}
for any $\psi\in W^{2,\infty}_g(\Omega)$, when $q\le p$. In particular, this implies 
\begin{equation}
\label{3.7A}
  \E_p(u_p,\Omega)\le 1+\E_\infty(g,\Omega).
\end{equation}
We now seek to bound the sequence $(u_p)_1^\infty$ in $W^{2,k}(\Omega)$, where $1<k<\infty$. By assumption \eqref{coercivity}, for any $v\in W^{2,\infty}(\Omega)$, we have 
\[
\av_{\Omega}\big( M+ \H(\J^2v)\big)^k\ge\frac{1}{2^{k-1}}C^k_1\av_{\Omega}\lvert\D^2v\rvert^k-3^{k-1}C_2^k\bigg(1+\av_{\Omega}\lvert v\rvert^{sk}+\av_{\Omega}\lvert\D v\rvert^{tk}\bigg).
\]
Since $0\leq s,t<1$, by applying the H\"older inequality, we obtain 
\[
\begin{split}
  \bigg(\av_{\Omega}\big( M+ \H(\J^2v)\big)^k\bigg)^\frac{1}{k}
  & \ge \,
  C_1\bigg(\av_{\Omega}\lvert\D^2v\rvert^k\bigg)^\frac{1}{k}-C_2\bigg[1+\bigg(\av_{\Omega}\lvert v\rvert^{sk}\bigg)^\frac{1}{k}+\bigg(\av_{\Omega}\lvert\D v\rvert^{tk}\bigg)^\frac{1}{k}\bigg]
  \\
  & \ge \,
  C_1\bigg(\av_{\Omega}\lvert\D^2v\rvert^k\bigg)^\frac{1}{k}-C_2\bigg[1+\bigg(\av_{\Omega}\lvert v\rvert^{k}\bigg)^\frac{s}{k}+\bigg(\av_{\Omega}\lvert\D v\rvert^{k}\bigg)^\frac{t}{k}\bigg].
  \end{split}
\]
In particular, for any $p\geq k$, using the H\"older inequality again, the above estimate implies
\begin{equation}
\label{coercivity-1}
\E_p(v,\Omega)
 \ge 
 \, C_1\bigg(\av_{\Omega}\lvert\D^2v\rvert^k\bigg)^\frac{1}{k}-C_2\bigg[1+\bigg(\av_{\Omega}\lvert v\rvert^{k}\bigg)^\frac{s}{k}+\bigg(\av_{\Omega}\lvert\D v\rvert^{k}\bigg)^\frac{t}{k}\bigg] -M.
\end{equation}
Suppose now that $v\in W^{2,\infty}_g(\Omega)$. We estimate
\begin{equation}
\label{3.9A}
\begin{split}
  \E_p(v,\Omega)
  & \ge \,
  C_1\bigg(\av_{\Omega}\big| \D^2v-\D^2g\big|^k\bigg)^\frac{1}{k}
  \\
  & \ \ \ \ -C_2\bigg[1 + \bigg(\av_{\Omega}\lvert v-g\rvert^{k}\bigg)^\frac{s}{k}+\bigg(\av_{\Omega}\lvert\D v-\D g)\rvert^{k}\bigg)^\frac{t}{k}\bigg]
  \\
  & \ \ \ \ - (C_1+C_2) \bigg[  \bigg(\av_{\Omega}\lvert\D^2g\rvert^k\bigg)^\frac{1}{k}+\bigg(\av_{\Omega}\lvert g\rvert^{k}\bigg)^\frac{s}{k}+\bigg(\av_{\Omega}\lvert\D g\rvert^{k}\bigg)^\frac{t}{k}\bigg] -M.
  \end{split}
\end{equation}
Since $v-g \in W^{2,\infty}_0(\Omega)$, by repeated applications of the Poincar\'e inequality, there exists $C_3=C_3(k)>1$ (depending also on $\Omega$) such that
\begin{equation}
\label{3.10A}
 (C_3(k)-1)\bigg(\av_{\Omega}\big| \D^2v-\D^2g\big|^k\bigg)^\frac{1}{k} \geq \bigg(\av_{\Omega}\big| \D v-\D g\big|^k\bigg)^\frac{1}{k} + \bigg(\av_{\Omega}\big| v- g\big|^k\bigg)^\frac{1}{k} .
\end{equation}
By combining \eqref{3.9A}-\eqref{3.10A}, we obtain
\[
\begin{split}
  \E_p(v,\Omega)
  & \ge \,
  \frac{C_1}{C_3(k)}\| v-g \|_{W^{2,k}(\Omega)}-C_2\Big[1+\big(\| v - g \|_{W^{2,k}(\Omega)}\big)^s + \big(\| v -g\|_{W^{2,k}(\Omega)}\big)^t \Big] 
  \\
  & \ \  \ \ - 2(C_1+C_2)\big(2+ \| g \|_{W^{2,k}(\Omega)}\big) -M.
  \end{split}
\]
The above estimate implies
\begin{equation}
\label{3.9}
\begin{split}
  \E_p(v,\Omega)
  & \ge \,
  \frac{C_1}{C_3(k)}\| v\|_{W^{2,k}(\Omega)}-C_2\Big[1+\big(\| v \|_{W^{2,k}(\Omega)}\big)^s + \big(\| v\|_{W^{2,k}(\Omega)}\big)^t \Big] 
  \\
  & \ \  \ \ - \bigg[2C_1+3C_2+\frac{C_1}{C_3(k)}\bigg]\big(2+ \| g \|_{W^{2,k}(\Omega)}\big) -M,
  \end{split}\end{equation}
for any $v\in W^{2,\infty}_g(\Omega)$, and any fixed $k \in (1,\infty)$ with $p\geq k$. Consider now the real function
\[
f \ :\ \ [0,\infty) \to \R,\ \ \  f(\lambda):=\frac{C_1}{C_3(k)} \lambda-C_2(1+\lambda^s+\lambda^t).
\]
Since $f''>0$ on $(0,\infty)$, the function is convex, and by the mean value theorem there exists $\lambda_0 >0$ such that $f(\lambda_0)=0$ and the graph of $f$ lies above its tangent at this root, whilst also $f'(\lambda_0)>0$. Therefore, 
\begin{equation}
\label{3.10}
\begin{aligned}
  \frac{C_1}{C_3(k)} \lambda-C_2(1+ \lambda^{s}+ \lambda^{t}) &=f(\lambda)
  \\
   & \ge f(\lambda_0)+f'(\lambda_0)(\lambda-\lambda_0)
  \\
  &= f'(\lambda_0)\lambda- f'(\lambda_0) \lambda_0 
  \\
  &=: A(k)\lambda-B(k),
\end{aligned}
\end{equation}
where $A(k),B(k)>0$ are constants depending on $k$ through $C_3(k)$, as well as on $s, t, C_1, C_2$. By combining \eqref{3.9} with \eqref{3.10}, we deduce
\begin{equation}
\label{coercivity-2}
\begin{split}
  \E_p(v,\Omega)
  & \ge \,
  A(k)\| v \|_{W^{2,k}(\Omega)}
  \\
  &\ \ \ \ - \bigg\{B(k) + \bigg[ 2C_1+3C_2+\frac{C_1}{C_3(k)}\bigg]\big(2+ \| g \|_{W^{2,k}(\Omega)}\big) + M \bigg\},
    \end{split}
\end{equation}
for any $v\in W^{2,\infty}_g(\Omega)$, and any fixed $k \in (1,\infty)$ with $p\geq k$. Selecting $v:=u_p$ in \eqref{coercivity-2}, we infer that $(u_p)_{p>1}$ is bounded in $W^{2,k}(\Omega)$, for all $k \in (1,\infty)$. Since $W^{2,k}(\Omega)$ is reflexive for $1<k<\infty$, we have 
$u_p\weaklyconv u_\infty$  in $W^{2,k}(\Omega)$ as $p\to\infty$, along a subsequence. Further, by uniqueness of weak limits, $u_\infty\in \bigcap_{1<k<\infty}W^{2,k}(\Omega)$. Additionally, since $W^{2,k}(\Omega) \Subset C^{1,\alpha}(\overline{\Omega})$, by passing to a further subsequence if necessary, we have $u_p \longrightarrow u_\infty$ strongly in $C^{1,\alpha}(\overline{\Omega})$ for all $\alpha\in(0,1)$. We now return to \eqref{coercivity-1} with $v=u_p$. Using the weak lower semicontinuity of the $L^k$-norm, and noting that $u_p \longrightarrow u_\infty$ in $C^{1}(\overline{\Omega})$ and $\D^2u_p\weaklyconv \D^2u_\infty$ in $L^{k}(\Omega;\R^{n^{\otimes 2}}_s)$ as $p\to \infty$ along a subsequence, we deduce by \eqref{3.7A} that
\[
\bigg(\av_{\Omega}\lvert\D^2u_\infty\rvert^k\bigg)^\frac{1}{k} \leq \frac{1}{C_1}\bigg\{1 + \E_\infty(g,\Omega)+ C_2\bigg[1+\bigg(\av_{\Omega}\lvert u_\infty\rvert^{k}\bigg)^\frac{s}{k}+\bigg(\av_{\Omega}\lvert\D u_\infty\rvert^{k}\bigg)^\frac{t}{k}\bigg]\bigg\}.
\]
By letting $k\to \infty$, we infer that $\D^2 u_\infty \in L^\infty(\Omega;\R^{n^{\otimes 2}}_s)$, whence $u_\infty \in W^{2,\infty}(\Omega)$. Consider now the Young measures 
\[
(\delta_{\D^2u_p})_{p>1}\subseteq\mathscr{Y}(\Omega,\R^{n^{\otimes2}}_s)
\]
generated by the Hessians $\D^2u_p\in L^p(\Omega;\R^{n^{\otimes 2}}_s)$. Since $(\D^2u_{p})_{p>1}$ is bounded in $L^k(\Omega;\R^{n^{\otimes 2}}_s)$ for all $k\in(1,\infty)$, it is equi-integrable and tight, therefore by passing perhaps to a further subsequence there exists a Young measure $\vartheta_\infty\in\mathscr{Y}(\Omega,\R^{n^{\otimes2}}_s)$ with 
\[
\delta_{\D^2u_p}\weakstar \vartheta_\infty \ \ \text{ in }\mathscr{Y}(\Omega,\R^{n^{\otimes2}}_s), 
\]
as $p\to\infty$. Further, since $\D^2 u_\infty \in L^\infty(\Omega;\R^{n^{\otimes 2}}_s)$, it follows that $\vartheta_\infty$ has compact support in $\R^{n^{\otimes2}}_s$, which is to say there exists some $R>0$ such that $\vartheta_\infty(x)\subseteq\mathbb{B}_R(0)$ for a.e.\ $x\in\Omega$, where $\mathbb{B}_R(0)$ is the ball of radius $R$ centred at the origin in $\R^{n^{\otimes2}}_s$. Additionally, $\vartheta_\infty$ has barycentre $\D^2u_\infty$, which is to say 
\[
  \D^2u_\infty(x)=\int_{\R^{n^{\otimes2}}_s}\X\hspace{1mm}\d[\vartheta_\infty(x)](\X)\ \ \text{ for a.e. }x\in\Omega.
\]
Now, since $\H$ is level-convex in the last variable, by the extended Jensen inequality \eqref{eqn:extendedjenseninequality} we have 
\[
\begin{split}
\H\big(\J^2u_\infty(x)\big) &=  \H\bigg(\J^1u_\infty(x),\int_{\R^{n^{\otimes2}}_s}\X\hspace{1mm}\d[\vartheta_\infty(x)](\X)\bigg)
\\
&\le [\vartheta_\infty(x)]-\underset{ \X \in \R^{n^{\otimes2}}_s}{\mathrm{ess}\sup}\hspace{1mm}\H(\J^1u_\infty(x),\X) ,
\end{split}
\]
for a.e.\ $x\in\Omega$. Hence, we may estimate
\[
\begin{aligned}
\underset{ \Omega}{\mathrm{ess}\sup}\hspace{1mm} \H (\J^2u_\infty) & =
  \underset{x\in \Omega}{\mathrm{ess}\sup}\hspace{1mm} \big(M+\H (\J^2u_\infty(x))\big) -M
  \\
  & \leq  \underset{ x\in \Omega}{\mathrm{ess}\sup}\hspace{1mm}\bigg(
  [\vartheta_\infty(x)]-\underset{\X \in \R^{n^{\otimes2}}_s}{\mathrm{ess}\sup}\hspace{1mm}\Big(M+\H\big(\J^1u_\infty(x),\X \big)\Big)\bigg) -M.
\end{aligned}
\]
Therefore,
\[
\begin{aligned}
\underset{ \Omega}{\mathrm{ess}\sup}\hspace{1mm} \H (\J^2u_\infty)  
  &\le \lim_{q\to\infty}\bigg(\av_{\Omega}\int_{\R^{n^{\otimes2}}_s}\big(M+\H(\J^1u_\infty(x) ,\X)\big)^q\hspace{1mm}\d[\vartheta_\infty(x)](\X)\hspace{1mm}\d x \bigg)^{\!\! \frac1q} -M
  \\
  &\le \liminf_{q\to\infty}\bigg[\liminf_{p\to\infty}\bigg(\av_{\Omega}\int_{\R^{n^{\otimes2}}_s}\big(M+\H(\J^1u_\infty ,\X)\big)^q\hspace{1mm}\d[\delta_{\D^2u_p} ](\X) \bigg)^{\!\! \frac1q}\bigg] -M
  \\
   &=\liminf_{q\to\infty}\bigg[\liminf_{p\to\infty}\bigg(\av_{\Omega} \big(M+\H(\J^1u_\infty ,\D^2u_p)\big)^q \bigg)^{\!\! \frac1q}\bigg] -M.
\end{aligned}
\]
Since $u_p \longrightarrow u_\infty$ in $C^1(\overline{\Omega})$ as $p\to\infty$ along a sequence $(p_j)_1^\infty$, there exists $R>0$ such that
\[
u_\infty(\Omega)\bigcup_{j\in\mathbb N} u_{p_j}(\Omega) \subseteq (-R,R),  \ \ \D u_\infty(\Omega)\bigcup_{j\in\mathbb N} \D u_{p_j}(\Omega) \subseteq \mathbb B_R(0).
\]
By assumption \eqref{continuity}, there exists $\ell>1$ and a modulus of continuity $\omega_R \in C[0,\infty)$ such that
\[
\begin{aligned}
  \lvert\H(\J^1u_\infty,\D^2u_p)\rvert&=\lvert\H(\J^1u_\infty,\D^2u_p)-\H(\J^1u_p,\D^2u_p)+\H(\J^1u_p,\D^2u_p)\rvert \\&\le\lvert\H(\J^1u_\infty,\D^2u_p)-\H(\J^1u_p,\D^2u_p)\rvert+\lvert\H(\J^2u_p)\rvert \\
  &\le\omega_R\big(\lvert u_\infty-u_p\rvert+\lvert \D u_\infty-\D u_p\rvert \big)(1+ |\D^2u_p|^\ell )+\lvert\H(\J^2u_p)\rvert,
\end{aligned}
\]
a.e.\ on $\Omega$. Let us define now the function $\Gamma : \Omega\times\R\times\R^n\times\R^{n^{\otimes2}}_s\longrightarrow \R$ by setting
\[
\Gamma(x,\eta,\mathrm p, \X):=\omega_R\Big( |u_\infty(x)-\eta| + \big|\D u_\infty(x)- \mathrm p\big| \Big)(1+ |\X|^\ell).
 \]
This allows us to write
\[
  |\H(\J^1u_\infty,\D^2u_p)| \leq 
  \Gamma(\J^2 u_p) + | \H(\J^2u_p)|, 
\] 
and, in view of the above and \eqref{eqn:Equpperboundwithp}, we deduce
 \[
\begin{aligned}
\E_\infty(u_\infty,\Omega) & =\smash{\underset{ \Omega}{\mathrm{ess}\sup}\hspace{1mm} \H (\J^2u_\infty)    }
\\
& \le\liminf_{q\to\infty}\bigg[\liminf_{p\to\infty}\bigg[\bigg(\av_{\Omega}\Gamma(\J^2u_p)^q\bigg)^{\!\! \frac1q}+\bigg(\av_{\Omega}\big(M+\H(\J^2u_p)\big)^q\bigg)^{\!\! \frac1q}\bigg]\bigg] -M, 
\\
&
\le\liminf_{q\to\infty}\bigg\{\liminf_{p\to\infty}\bigg[ \omega_R\Big(\| u_p-u_\infty\|_{W^{1,\infty}(\Omega)} \Big)\bigg(\av_{\Omega}\big(1+|\D^2u_p|^{\ell}\big)^q\bigg)^{\!\! \frac1q}
\\
&\ \ \  +\bigg(\av_{\Omega}\big(M+\H(\J^2u_p)\big)^q\bigg)^{\!\! \frac1q}\bigg]\bigg\} -M, 
\\
&\le\liminf_{q\to\infty}\Big[\liminf_{p\to\infty}\big(2^{-p}+\E_\infty(\psi,\Omega)\big)\Big], 
\\
&=\E_\infty(\psi,\Omega),
\end{aligned}
\]
for any $\psi \in W^{2,\infty}_g(\Omega)$. This establishes that $u_\infty$ is a global minimiser, which completes the proof.
\end{proof}

%%%%%%%%%%%%%%%%%%%%%%%%%%%%%%%%%%%%%%%%%%%%%%%%%%%%%%%%%%%

The next result will be useful for the proof of the existence of absolute minimisers.

\begin{cor}[Diagonal lower semicontinuity]\label{corollary:diagonallowersemicontinuity}
Let $\mathcal{O}\subseteq\Omega$ be a measurable set of positive measure. If $\H\in C(\Omega\times\R\times\R^n\times\R^{n^{\otimes2}})$ satisfies the assumptions of Theorem \ref{theorem:existenceofglobalminimisers}, then the functional $\E_\infty$ is (diagonally) weakly lower semicontinuous in the following sense: 
$$
\E_\infty(u_\infty,\O)\le\liminf_{j\to\infty}\bigg(\av_{\O}\big(M+\H(\J^2u_{p_j})\big)^{p_j}\bigg)^{\frac{1}{p_j}} -M,
$$ 
where $(u_{p_j})_{j=1}^\infty$ is the subsequence along which $u_{p}\weaklyconv u_\infty$ in $W^{2,q}(\Omega)$, for all $q\in(1,\infty)$.
\end{cor}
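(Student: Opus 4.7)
The strategy is to re-run the Young-measure step from the proof of Theorem \ref{theorem:existenceofglobalminimisers} on the subdomain $\O$ in place of $\Omega$, and then to collapse the resulting double $\liminf_{q\to\infty}\liminf_{j\to\infty}$ into a single diagonal $\liminf_{j\to\infty}$ by combining H\"older's inequality with the continuity hypothesis \eqref{continuity}. The key ingredients available from that proof are that the Young measure $\vartheta_\infty$ generated along $(p_j)$ by the Hessians $\D^2u_{p_j}$ has barycentre $\D^2u_\infty$ (independently of whether it is tested against $\Omega$ or $\O$), the strong $C^1(\overline{\Omega})$-convergence $u_{p_j}\to u_\infty$, and the uniform $W^{2,k}$-bounds on $(u_p)_{p>1}$ for every $k\in(1,\infty)$.

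First, applying the extended Jensen inequality \eqref{eqn:extendedjenseninequality} pointwise at $x\in \O$ to the level-convex map $\X\mapsto \H(\J^1u_\infty(x),\X)$, and then reproducing the $L^q$-upgrade step of the proof of Theorem \ref{theorem:existenceofglobalminimisers} (with $\Omega$ replaced by $\O$, and without yet invoking the continuity bound used there), I would arrive at
\[
\E_\infty(u_\infty,\O)\,\le\, \liminf_{q\to\infty}\liminf_{j\to\infty}\bigg(\av_\O\big(M+\H(\J^1u_\infty,\D^2u_{p_j})\big)^q\bigg)^{\!\!1/q} -M.
\]

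Next I would bound the inner limit by the diagonal one. Fix $q\in(1,\infty)$, and pick $j$ large enough that $p_j\ge q\ell$, where $\ell=\ell(R)$ is from \eqref{continuity} and $R>0$ is chosen so that $u_{p_j}(\Omega), u_\infty(\Omega)\subseteq (-R,R)$ and $\D u_{p_j}(\Omega), \D u_\infty(\Omega)\subseteq \mathbb{B}_R(0)$ for all $j$. Since $M+\H\ge 0$, Minkowski's inequality on the probability space $(\O, \d x/|\O|)$ yields
\begin{align*}
\bigg(\av_\O\big(M+\H(\J^1u_\infty,\D^2u_{p_j})\big)^q\bigg)^{\!\!1/q}
& \,\le\, \bigg(\av_\O\big(M+\H(\J^2 u_{p_j})\big)^q\bigg)^{\!\!1/q} \\
& \quad +\, \bigg(\av_\O \big| \H(\J^1u_\infty,\D^2 u_{p_j})-\H(\J^2u_{p_j})\big|^q\bigg)^{\!\!1/q}.
\end{align*}
H\"older's inequality on the same probability measure (equivalently, monotonicity of $L^r$-means) upgrades the first term to the $L^{p_j}$-mean $\big(\av_\O(M+\H(\J^2u_{p_j}))^{p_j}\big)^{1/p_j}$, while \eqref{continuity} combined with the monotonicity of $\omega_R$ controls the second term by $\omega_R\big(\|u_{p_j}-u_\infty\|_{W^{1,\infty}(\Omega)}\big)\big(\av_\O (1+|\D^2u_{p_j}|^\ell)^q\big)^{1/q}$.

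Finally, the $W^{2,q\ell}$-estimate contained in \eqref{coercivity-2} gives a uniform-in-$j$ bound on $\|\D^2u_{p_j}\|_{L^{q\ell}(\Omega)}$, hence on the $L^q(\O)$-mean of $1+|\D^2u_{p_j}|^\ell$, while strong $C^1$-convergence sends $\omega_R(\|u_{p_j}-u_\infty\|_{W^{1,\infty}(\Omega)})\to 0$ as $j\to\infty$. Passing to $\liminf_{j\to\infty}$ therefore erases the remainder, and since the resulting bound is uniform in $q$, the outer $\liminf_{q\to\infty}$ preserves it; combining with the first display produces the claimed inequality. The only delicate bookkeeping is the simultaneous matching of exponents ($p_j\ge q$ for H\"older and $p_j\ge q\ell$ for the uniform Hessian bound), both of which hold for each fixed $q$ once $j$ is sufficiently large; I expect this to be the sole (and essentially routine) obstacle.
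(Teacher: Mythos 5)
Your argument is correct and is precisely the argument the paper intends: the paper omits the proof, deferring to \cite[Lemma 5.1]{Katzourakis-2015-AbsMin}, whose structure is exactly what you describe --- rerun the Jensen/Young-measure step from the proof of Theorem \ref{theorem:existenceofglobalminimisers} with $\O$ in place of $\Omega$, then collapse the double $\liminf$ via Minkowski, the monotonicity of $L^r$-means on the probability space $(\O,\d x/\lvert\O\rvert)$, and the continuity bound \eqref{continuity} together with the uniform $W^{2,q\ell}$-bounds. The one point worth making explicit is that the Young measure in Theorem \ref{theorem:existenceofglobalminimisers} is only generated along a further subsequence of $(p_j)$, so you should first pass to a sub-subsequence realising $\liminf_{j\to\infty}\big(\av_\O(M+\H(\J^2u_{p_j}))^{p_j}\big)^{1/p_j}$ and extract the Young measure there; this is routine and does not affect the conclusion.
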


%%%%%%%%%%%%%%%%%%%%%%%%%%%%%%%%%%%%%%%%%%%%%%%%%%%%%%%%%%%

\begin{proof}[\rm \textbf{Proof of Corollary }\ref{corollary:diagonallowersemicontinuity}]
  This proof follows the exact same structure as \cite[Lemma 5.1]{Katzourakis-2015-AbsMin}, so the details are omitted.
\end{proof}

%%%%%%%%%%%%%%%%%%%%%%%%%%%%%%%%%%%%%%%%%%%%%%%%%%%%%%%%%%%

We now present a result on the existence of absolute minimisers in dimension one.

\begin{thm}[Existence of absolute minimisers in 1D]\label{theorem:existenceofabsoluteminimisers}
Let $\Omega\subseteq\R$ be open and bounded. If $\H\in C(\Omega\times\R\times\R\times\R)$ satisfies the assumptions of Theorem \ref{theorem:existenceofglobalminimisers} with $n=1$, then the minimiser constructed therein is an absolute minimiser of $\E_\infty$ on $\Omega$, i.e. 
\begin{equation}
  \E_\infty(u,\Omega')\le\E_\infty(u+\phi,\Omega'),\hspace{1mm}\forall\hspace{1mm}\Omega'\Subset\Omega,\hspace{1mm}\forall\hspace{1mm}\phi\in W^{2,\infty}_0(\Omega').
\end{equation}
\end{thm}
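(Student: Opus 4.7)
The plan is to show that for any $\Omega' \Subset \Omega$ and any $\phi \in W^{2,\infty}_0(\Omega')$, we have $\E_\infty(u_\infty, \Omega') \leq \E_\infty(u_\infty + \phi, \Omega')$. Without loss of generality $\Omega'=(a',b')$ is a single interval (otherwise argue on each component). First we refine the choice of approximate minimisers from Theorem \ref{theorem:existenceofglobalminimisers} by taking $u_p$ such that $\E_p(u_p,\Omega)\leq 2^{-p^2}+\inf_{W^{2,\infty}_g(\Omega)}\E_p(\cdot,\Omega)$; this is compatible with the existence proof (which only uses that the approximation rate vanishes) but provides a super-exponential decay that will turn out to be crucial.

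The key step is to construct an admissible competitor $v_p\in W^{2,\infty}_g(\Omega)$ that agrees with $u_p$ on $\Omega\setminus\Omega'$ and essentially equals $u_\infty+\phi$ on $\Omega'$, up to a small boundary adjustment. Since $u_p\to u_\infty$ in $C^1(\overline\Omega)$, the four scalar quantities $(u_p-u_\infty)(a')$, $(u_p-u_\infty)(b')$, $(u_p'-u_\infty')(a')$, $(u_p'-u_\infty')(b')$ vanish as $p\to\infty$. Because $\phi$ and $\phi'$ vanish at $a'$ and $b'$, the unique Hermite cubic $\psi_p$ on $[a',b']$ matching these four boundary data satisfies $\|\psi_p\|_{W^{2,\infty}(\Omega')}\to 0$. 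Setting $v_p:=u_\infty+\phi+\psi_p$ on $\Omega'$ and $v_p:=u_p$ on $\Omega\setminus\Omega'$ gives $v_p\in W^{2,\infty}_g(\Omega)$ by the $C^1$-matching across $\partial\Omega'$. Continuity of $\H$ together with the convergence $v_p\to u_\infty+\phi$ in $W^{2,\infty}(\Omega')$ imply $\|F(v_p)\|_{L^\infty(\Omega')}\to M+\E_\infty(u_\infty+\phi,\Omega')$, where $F:=M+\H$ as in \eqref{3.1}.

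Applying the refined approximate minimality $\E_p(u_p,\Omega)\leq 2^{-p^2}+\E_p(v_p,\Omega)$, raising to the $p$-th power, and using $F\geq M-m>0$, we obtain $\int_\Omega F(u_p)^p \leq (1+\zeta_p)\int_\Omega F(v_p)^p$ with $\zeta_p \lesssim p\cdot 2^{-p^2}/(M-m)$ and, crucially, $\zeta_p^{1/p}\to 0$. Splitting over $\Omega=\Omega'\cup(\Omega\setminus\Omega')$ and cancelling the identical $\Omega\setminus\Omega'$ contributions (since $v_p=u_p$ there) yields
\[
\int_{\Omega'} F(u_p)^p \,\leq\, (1+\zeta_p)\int_{\Omega'} F(v_p)^p+\zeta_p\int_{\Omega\setminus\Omega'} F(u_p)^p.
\]
Dividing by $|\Omega'|$, taking $p$-th roots via the subadditivity $(a+b)^{1/p}\leq a^{1/p}+b^{1/p}$ (valid for $p\geq 1$), and passing to the limit, the residual term vanishes because $\zeta_p^{1/p}\to 0$ while $\av_{\Omega\setminus\Omega'} F(u_p)^p$ is bounded uniformly in $p$ (an easy consequence of $\E_p(u_p,\Omega)\to\E_\infty(u_\infty,\Omega)$, itself immediate from Corollary \ref{corollary:diagonallowersemicontinuity} combined with the approximate minimality on $\Omega$). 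Hence $\limsup_{p\to\infty}(\av_{\Omega'} F(u_p)^p)^{1/p}\leq M+\E_\infty(u_\infty+\phi,\Omega')$. Combined with Corollary \ref{corollary:diagonallowersemicontinuity} applied on $\Omega'$, which provides the lower bound $\E_\infty(u_\infty,\Omega')+M\leq \liminf_{p\to\infty}(\av_{\Omega'} F(u_p)^p)^{1/p}$, the absolute minimality follows.

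The principal obstacle is the residual error term $\zeta_p\int_{\Omega\setminus\Omega'} F(u_p)^p$: under the original rate $\eta_p=2^{-p}$ one would have $\zeta_p^{1/p}\to 1/2$, leaving a spurious contribution which cannot be absorbed; the super-exponential refinement $\eta_p=2^{-p^2}$ remedies this. The one-dimensional hypothesis enters essentially in the boundary adjustment: since $\partial\Omega'$ consists of just two points with four $C^1$-matching conditions, a single Hermite cubic on $[a',b']$ realises $\|\psi_p\|_{W^{2,\infty}(\Omega')}\to 0$ from $C^0$-small boundary data; this is generally unavailable in higher dimensions, where extending $C^0$-small boundary data to $W^{2,\infty}$-small interior data typically fails.
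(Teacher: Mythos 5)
Your proof is correct and follows the same overall strategy as the paper's: transplant the competitor $u_\infty+\phi$ into the admissible class for $u_p$ via a cubic Hermite boundary correction (your $\psi_p$ is exactly the paper's $Q_p-Q_\infty$), localise the approximate $L^p$-minimality to $\Omega'=(a',b')$ by cancelling the $\Omega\setminus\Omega'$ contributions, and close with Corollary \ref{corollary:diagonallowersemicontinuity} applied on $\Omega'$. The one genuine difference is your quantitative treatment of the localisation step. The paper asserts $\E_p(u_p,(a,b))\le 2^{-p}+\E_p(u_p+\phi_p,(a,b))$ outright, ``by the additivity of the integral'', which suppresses precisely the residual term you isolate: with an error $2^{-p}$ additive on the $p$-th root one only obtains $\int_{\Omega'}F(u_p)^p\le(1+\zeta_p)\int_{\Omega'}F(v_p)^p+\zeta_p\int_{\Omega\setminus\Omega'}F(u_p)^p$ (with $F=M+\H$) for $\zeta_p\sim p\,2^{-p}$, so that $\zeta_p^{1/p}\to\tfrac12$ and the leftover does not vanish after taking $p$-th roots. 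Your super-exponential rate $2^{-p^2}$ --- an equally admissible selection of $i(p)$ in the proof of Theorem \ref{theorem:existenceofglobalminimisers}, which only needs the error to vanish --- forces $\zeta_p^{1/p}\to0$ and removes this term cleanly; on this point your argument is more careful than the paper's. Two small corrections: what you actually need, and what holds by \eqref{3.7A}, is that $\big(\int_{\Omega\setminus\Omega'}F(u_p)^p\big)^{1/p}$ is bounded uniformly in $p$; the quantity $\av_{\Omega\setminus\Omega'}F(u_p)^p$ itself typically grows geometrically in $p$, so that parenthetical should be rephrased (the conclusion is unaffected, since you multiply by $\zeta_p^{1/p}\to0$). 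Also, since you modify the selection $i(p)$, you establish the statement for the minimiser produced by the refined construction rather than for an arbitrary admissible choice with the $2^{-p}$ rate; this is consistent with the theorem's phrase ``the minimiser constructed therein'', but it is worth saying explicitly.
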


%%%%%%%%%%%%%%%%%%%%%%%%%%%%%%%%%%%%%%%%%%%%%%%%%%%%%%%%%%%

This proof, which is very similar to that in \cite{Katzourakis-Pryer-2020}, makes essential use of the topology of $\R$. We do nonetheless provide it for the convenience of the reader.

%%%%%%%%%%%%%%%%%%%%%%%%%%%%%%%%%%%%%%%%%%%%%%%%%%%%%%%%%%%

\begin{proof}[\rm \textbf{Proof of Theorem }\ref{theorem:existenceofabsoluteminimisers}]
We begin with a general observation. Given $A,B,A',B',a,b\in\R$ with $a<b$, there exists a unique cubic Hermite interpolant $Q:\R\to\R$ satisfying $$Q(a)=A,Q(b)=B,Q'(a)=A',Q'(b)=B'.$$ Further, let $(v_p)_{p=1}^\infty\subseteq W^{2,\infty}(a,b)$ be any sequence of functions satisfying $v_p\to v_\infty$ in $C^1[a,b]$ as $p\to\infty$. If $Q_p$ is the cubic polynomial such that $Q_p-v_p\in W^{2,\infty}_0(a,b)$ for $p\in\mathbb{N}\cup\{\infty\}$, namely 
\begin{equation}
Q_p(a)=v_p(a),Q_p(b)=v_p(b),Q_p'(a)=v_p'(a),Q_p'(b)=v_p'(b),
\end{equation}
then we have $Q_p\longconv Q_\infty$ strongly in $W^{2,\infty}(a,b)$ as $p\to\infty$. Let $(u_p)_{p=1}^\infty$ be the sequence of approximate minimisers of Theorem \eqref{theorem:existenceofglobalminimisers} which satisfies $u_p\weaklyconv u_\infty$ in $W^{2,q}(\Omega)$ as $p\to\infty$ along a subsequence for any $q>1$. Fix an open subset $\Omega'\subseteq\Omega$ and $\phi\in W^{2,\infty}_0(\Omega')$. Since any open set in $\R$ can be expressed as a countable disjoint union of intervals, we may assume $\Omega'=(a,b)$. In order to conclude, it suffices to show that
\begin{equation}\label{eqn:absoluteminimiserinequalityforab}
  \E_\infty(u_\infty,(a,b))\le\E_\infty(u_\infty+\phi,(a,b))
\end{equation}
for arbitrary $\phi\in W^{2,\infty}_0(a,b)$. Consider, for any $p\in\mathbb{N}\cup\{\infty\}$, the unique cubic polynomial such that $Q_p-u_p\in W^{2,\infty}(a,b)$. By the above observations and Theorem \eqref{theorem:existenceofglobalminimisers}, along a subsequence we have
\begin{equation}\label{eqn:QpconvergesQinfty}
  Q_p\longconv Q_\infty\text{ in }W^{2,\infty}(a,b)\text{ as }p\to\infty.
\end{equation}
We define for $p\in\mathbb{N}$ the function $\phi_p:=\phi+u_\infty-u_p+Q_p-Q_\infty$. Since all three functions $\phi,u_\infty-Q_\infty,u_p-Q_p$ are in $W^{2,\infty}_0(a,b)$, the same is true for $\phi_p$. By Corollary \eqref{corollary:diagonallowersemicontinuity}, and by the additivity of the integral, we have 
\begin{equation}
\begin{aligned}
  \E_p(u_p,(a,b))&\le 2^{-p}+\E_p\big(u_p+\phi_p,(a,b)\big) \\
  &\le2^{-p}+\E_p\big(u_\infty+\phi+[Q_p-Q_\infty],(a,b)\big) \\ 
  &\le 2^{-p}+\bigg(\frac{b-a}{\lvert\Omega\rvert}\bigg)^{\frac1p}\E_\infty\big(u_\infty+\phi+[Q_p-Q_\infty],(a,b)\big).
\end{aligned}
\end{equation}
By invoking \eqref{eqn:QpconvergesQinfty} and passing to the limit as $p\to\infty$, we deduce \eqref{eqn:absoluteminimiserinequalityforab}.
\end{proof}

%%%%%%%%%%%%%%%%%%%%%%%%%%%%%%%%%%%%%%%%%%%%%%%%%%%%%%%%%%%

\smallskip

\section{Variational Characterisation of $\mathrm{A}^2_\infty$ through Absolute Minimisers}\label{section:variationalcharacterisation}

Let us recall that the lack of G\^ateaux differentiability of $\E_\infty$ implies that we cannot simply deduce that minimisers of $\E_\infty$ yield \enquote{stationary points} of the functional through variations. Hence, we follow an alternative approach to derive a PDE as a necessary condition for minimality. To this end, we follow \cite{Katzourakis-Pryer-2020} and use $L^p$ approximations to discover the relevant PDE, in the limit of Euler-Lagrange equations of $L^p$ integral functionals as $p\to\infty$. Of course this well-known formal derivation method does not yield a variational characterisation through absolute minimisers, it allows us merely to discover the ``correct" nonlinear PDE associated to the supremal functional. We then show that absolute minimisers of $\E_\infty$, at least if they are in $C^3(\Omega)$, are classical solutions of the equation \eqref{eqn:A2inftensor} on $\Omega$.

%%%%%%%%%%%%%%%%%%%%%%%%%%%%%%%%%%%%%%%%%%%%%%%%%%%%%%%%%%%%%%

\begin{der}[Formal derivation of $\A^2_\infty u=0$]\label{derivation:derivationofA2infty}
Let $u\in C^4(\Omega)$, and suppose that $\H$ is $C^2$ and bounded below by $-m$, where $m> 0$. Let $p\in (1,\infty)$ and fix $M>m$. Consider the Euler-Lagrange equation of the $p$-integral functional 
$$
\E_p(u,\Omega) :=\bigg(\av_{\Omega} \big(M+\H(\J^2u)\big)^p\bigg)^{\frac1p} -M,
$$ 
which is given by 
\[
\begin{split}
\sum_{i,j=1}^n \D^2_{ij}\left(\big(M+\H(\J^2u)\big)^{p-1}\H_{\X_{ij}}(\J^2u)\right)  & - \sum_{k=1}^n\D_k\left(\big(M+\H(\J^2u)\big)^{p-1}\H_{\p_k}(\J^2u)\right)
\\
& +\big( M +\H(\J^2u)\big)^{p-1} \H_\eta(\J^2u)  =0 \ \ \ \text{ in }\Omega.
\end{split}
\]
By distributing derivatives and rescaling, we obtain 
\[
\begin{aligned}
  \sum_{i,j=1}^n  \D_i\big(\H(\J^2u)\big) \D_j\big(\H(\J^2u)\big)  \H_{\X_{ij}} & (\J^2u) 
  \\
   =\frac{\big(M+\H(\J^2u)\big)^{3-p}}{p-2}\sum_{i,j,k=1}^n\bigg\{ & - \frac{\big(M+\H(\J^2u)\big)^{p-1}}{p-1}\Big[ \H_\eta(\J^2u) 
  \\& - \D_k\big(\H_{\p_k}(\J^2u)\big) - \D^2_{ij}\big(\H_{\X_{ij}}(\J^2u)\big)\Big]\\
  & +  \big(M+\H(\J^2u)\big)^{p-2} \Big[ \D_k\big(\H(\J^2u)\big)\H_{\p_k}(\J^2u)
  \\
  & \hspace{80pt} -  \D_i\big(\D_j\big(\H(\J^2u)\big)\H_{\X_{ij}}(\J^2u)\big)
  \\
  & \hspace{80pt} -  
 \D_i\big(\H(\J^2u)\big)\D_j\big(\H_{\X_{ij}}(\J^2u)\big)\Big] \bigg\}, 
 \end{aligned}
\]
 which implies
 $$
  \sum_{i,j=1}^n  \D_i\big(\H(\J^2u)\big) \D_j\big(\H(\J^2u)\big)  \H_{\X_{ij}}   (\J^2u) =\mathrm O\Big(\frac{1}{p-2} \Big), \ \text{ as } p\to\infty. 
  $$
Since $(M+\H(\J^2u))$ is positive on $\Omega$, we obtain in the limit as $p\to\infty$ that
\[
  \sum_{i,j=1}^n\H_{\X_{ij}}(\J^2u)\D_i\big(\H(\J^2u)\big)\D_j\big(\H(\J^2u)\big)=0 \ \ \text{ in }\Omega,
\]
which in effect is \eqref{eqn:A2inftensor}.  Recall that, since we have the approximation of functionals
\[
\E_p(u,\Omega) \longrightarrow \,\underset{\Omega}{\mathrm{ess}\sup} \, \big(M+\H(\J^2u)\big) - M =\, \E_\infty(u,\Omega),
\]
as $p\to\infty$, this equation is the ``correct" PDE associated to the supremal functional $\E_\infty$.
\end{der}

%%%%%%%%%%%%%%%%%%%%%%%%%%%%%%%%%%%%%%%%%%%%%%%%%%%%%%%%%%%%%%

We now present a tertiary lemma to Theorem \eqref{theorem:variationalcharacterisation}.

%%%%%%%%%%%%%%%%%%%%%%%%%%%%%%%%%%%%%%%%%%%%%%%%%%%%%%%%%%%%%

\begin{lem}\label{lem:danskinstheoremforA2infty}
Let $\Omega\subseteq\R^n$ be open and bounded, and $\H\in C^1(\Omega\times\R\times\R^n\times\R^{n^{\otimes2}}_s)$. Let $u\in C^2(\Omega)$ be an absolute minimiser of $\E_\infty$ on $\Omega$. Fix any $\O\Subset\Omega$, and set
\[
\O(u):=\underset{x\in\overline{\O}}{\mathrm{argmax}}\, \H\big(x,u(x),\D u(x), \D^2 u(x)\big) .
\] 
Then, we have
\begin{equation}
\label{eqn:maxtotalonargmax}
\max_{\O(u)}\bigg[\H_\eta(\J^2u)\phi+\H_\p(\J^2u)\cdot\D\phi+\H_\X(\J^2u):\D^2\phi\bigg]\ge0,
\end{equation} 
and
\begin{equation}
\label{eqn:mintotalonargmax}
\min_{\O(u)}\bigg[\H_\eta(\J^2u)\phi+\H_\p(\J^2u)\cdot\D\phi+\H_\X(\J^2u):\D^2\phi\bigg]\le0,
\end{equation} 
for any $\phi\in C^2_0(\overline{\O}):=W^{2,\infty}_0(\O)\cap C^2(\overline{\O})$.  

Note that \eqref{eqn:mintotalonargmax} follows from \eqref{eqn:maxtotalonargmax} via the substitution $\phi \leftrightarrow -\phi$.
\end{lem}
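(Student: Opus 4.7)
The plan is to exploit the absolute minimality of $u$ on the fixed subdomain $\mathcal{O} \Subset \Omega$ by considering variations along a fixed test function $\phi \in C^2_0(\overline{\mathcal{O}})$ and reducing the two inequalities to a standard one-dimensional optimisation argument, essentially a Danskin-type envelope argument for the max of a smooth parametrised family over a compact set. Since $\phi \in W^{2,\infty}_0(\mathcal{O})$ and $u$ is an absolute minimiser, we have, for every $\varepsilon \in \R$,
\[
\E_\infty(u,\mathcal{O}) \,\leq\, \E_\infty(u+\varepsilon \phi, \mathcal{O}).
\]
Because $u \in C^2(\Omega)$, $\phi \in C^2(\overline{\mathcal{O}})$, and $\H \in C^1$, the map $F:\overline{\mathcal{O}} \times \R \to \R$ defined by $F(x,\varepsilon) := \H\big(\J^2(u+\varepsilon\phi)(x)\big)$ is continuous (in fact jointly $C^1$) on the compact set $\overline{\mathcal{O}} \times \R$. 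Hence the essential suprema above coincide with maxima over $\overline{\mathcal{O}}$, and the minimality statement becomes $L(0) \leq L(\varepsilon)$ for all $\varepsilon \in \R$, where $L(\varepsilon) := \max_{x \in \overline{\mathcal{O}}} F(x,\varepsilon)$.

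First I would establish \eqref{eqn:maxtotalonargmax}. For $\varepsilon > 0$, choose a maximiser $x_\varepsilon \in \overline{\mathcal{O}}$ of $F(\cdot,\varepsilon)$. Then
\[
0 \,\leq\, L(\varepsilon) - L(0) \,\leq\, F(x_\varepsilon,\varepsilon) - F(x_\varepsilon,0) \,=\, \varepsilon\, \partial_\varepsilon F(x_\varepsilon,\theta_\varepsilon)
\]
for some $\theta_\varepsilon \in (0,\varepsilon)$, by the mean value theorem. Dividing by $\varepsilon>0$ gives $\partial_\varepsilon F(x_\varepsilon,\theta_\varepsilon) \geq 0$. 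By compactness of $\overline{\mathcal{O}}$, a subsequence of $x_\varepsilon$ converges to some $x_0 \in \overline{\mathcal{O}}$ as $\varepsilon \downarrow 0$. The continuity of $F$ and of $L$ (which is continuous because $F$ is uniformly continuous on compacta) forces $F(x_0,0) = L(0)$, i.e.\ $x_0 \in \mathcal{O}(u)$. Passing to the limit in $\partial_\varepsilon F(x_\varepsilon,\theta_\varepsilon) \geq 0$ and using the continuity of the partial derivative, we obtain $\partial_\varepsilon F(x_0,0) \geq 0$ for some $x_0 \in \mathcal{O}(u)$, whence $\max_{\mathcal{O}(u)} \partial_\varepsilon F(\cdot,0) \geq 0$. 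A direct computation of $\partial_\varepsilon F(x,0)$ by the chain rule recovers the bracketed expression appearing in \eqref{eqn:maxtotalonargmax}, completing the first inequality.

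The companion inequality \eqref{eqn:mintotalonargmax} follows, as noted in the statement, by replacing $\phi$ with $-\phi$, or equivalently by running the same argument with $\varepsilon < 0$ (dividing by a negative quantity flips the sign and the $\max$ becomes a $\min$ after taking the limit through the argmax selection). The main technical point, and the step I would spell out most carefully, is the upper semicontinuity of the argmax multifunction $\varepsilon \mapsto \mathrm{argmax}_{\overline{\mathcal{O}}} F(\cdot,\varepsilon)$ as $\varepsilon \to 0$, which is what guarantees that the limiting point $x_0$ lies in $\mathcal{O}(u)$. This is standard once joint continuity of $F$ and compactness of $\overline{\mathcal{O}}$ are in place, so there is no genuine obstacle, but it is the only non-purely-algebraic ingredient in the argument and must be made explicit in order to apply the chain-rule identification in the last step.
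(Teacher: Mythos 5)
Your argument is correct and takes essentially the same route as the paper: both reduce the claim to the sign of the one-sided derivatives at $t=0$ of $t\mapsto\max_{\overline{\O}}\H\big(\J^2(u+t\phi)\big)$, the only difference being that the paper invokes Danskin's theorem for the envelope formula, whereas you reprove the needed one-sided half of it via the mean value theorem and the upper semicontinuity of the argmax map. (The phrase ``jointly $C^1$ on the compact set $\overline{\O}\times\R$'' is a harmless slip --- restrict to $\overline{\O}\times[-1,1]$, and note only continuity in $x$ together with joint continuity of $\partial_\varepsilon F$ is actually needed.)
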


%%%%%%%%%%%%%%%%%%%%%%%%%%%%%%%%%%%%%%%%%%%%%%%%%%%%%%%%%%%%%

The proof of Lemma \ref{lem:danskinstheoremforA2infty} is essentially an aplication of Danskin's theorem on differentiating maxima for continuous functions \cite{Danskin-1967}. 

%%%%%%%%%%%%%%%%%%%%%%%%%%%%%%%%%%%%%%%%%%%%%%%%%%%%%%%%%%%%%

\begin{proof}[\rm \textbf{Proof of Lemma }\ref{lem:danskinstheoremforA2infty}]
Fix $\O\Subset\Omega$ and $\phi\in C^2_0(\overline{\O})$. Since $u\in C^2(\Omega)$ is an absolute minimiser, we have $\E_\infty(u+\phi,\Omega)\geq \E_\infty(u,\Omega)$. This implies
\begin{equation}
  h(t):=\max_{\overline{\O}} \H\Big(\cdot,u+t\phi,\D u+t\D\phi,\D^2u+t\D^2\phi \Big) \ge\max_{\overline{\O}}\H(\J^2u)=h(0)
\end{equation}
for any $t\in\R$. Hence, if $h'(0^+)$ exists, we must have $h'(0^+)\ge0$, and if $h'(0^-)$ exists, we must have $h'(0^-)\le0$. By Danskin's theorem \cite{Danskin-1967}, we can compute 
\begin{equation}
\begin{aligned}
h'(0^+)&=\frac{\d}{\d t}\bigg|_{t=0^+}\max_{\overline{\O}} \,\H\Big(\cdot,u+t\phi,\D u+t\D\phi,\D^2u+t\D^2\phi\Big) 
\\ 
&=\max_{\overline{\O}}\bigg[\H_\eta(\J^2u)\phi+\H_\p(\J^2u)\cdot\D\phi+\H_\X(\J^2u):\D^2\phi\bigg],
\end{aligned}
\end{equation}
and therefore the semiderivative from the right exists. Similarly, 
\begin{equation}
\begin{aligned}
h'(0^-)&=\frac{\d}{\d t}\bigg|_{t=0^-}\max_{\overline{\O}}\, \H \Big(\cdot,u+t\phi,\D u+t\D\phi,\D^2u+t\D^2\phi \Big)
 \\
  &=\max_{\overline{\O}}\bigg[\H_\eta(\J^2u)\phi+\H_\p(\J^2u)\cdot\D\phi+\H_\X(\J^2u):\D^2\phi\bigg].
\end{aligned}
\end{equation}
The lemma ensues.
\end{proof}

%%%%%%%%%%%%%%%%%%%%%%%%%%%%%%%%%%%%%%%%%%%%%%%%%%%%%%%%%%%%%

\begin{thm}[Absolute minimisers in $C^3$ solve $\A^2_\infty u=0$]\label{theorem:variationalcharacterisation}Let $\Omega\subseteq\R^n$ be open, and suppose that $\H\in C^1(\Omega\times\R\times\R^n\times\R^{n^{\otimes2}}_s)$. If $u\in C^3(\Omega)$ is an absolute minimiser of the functional \eqref{eqn:fullfunctional} on $\Omega$, then $u$ is a classical solution of \eqref{eqn:A2inftensor}, that is 
$$
\A^2_\infty u=\H_\X(\J^2u):\D\big(\H(\J^2u))\otimes\D\big(\H(\J^2u)\big)=0 \ \ \text{ on }\Omega.
$$ 
\end{thm}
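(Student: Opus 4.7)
The plan is to reduce the statement to a pointwise assertion: fix an arbitrary $x_0 \in \Omega$ and show $\A^2_\infty u(x_0)=0$. Setting $F:=\H(\J^2u) \in C^1(\Omega)$ (which uses $u \in C^3$ and $\H \in C^1$), I split into two cases. If $\D F(x_0)=0$, then every summand in
\[
\A^2_\infty u(x_0) \,=\, \sum_{i,j=1}^n \H_{\X_{ij}}(\J^2u)(x_0)\, \D_i F(x_0)\, \D_j F(x_0)
\]
contains a vanishing factor and the identity is immediate. The substance of the proof is the case $\D F(x_0)\neq 0$, in which I will extract $\A^2_\infty u(x_0)=0$ from Lemma \ref{lem:danskinstheoremforA2infty} via a shrinking internal tangent ball construction.

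Set $\nu := \D F(x_0)/|\D F(x_0)|$, $c_r := x_0 - r\nu$, and $\O_r := \B_r(c_r)$. For small $r>0$ we have $\O_r \Subset \Omega$, and $x_0 \in \partial\O_r$ is the boundary point whose outward unit normal equals $\nu$. As test function I take $\phi_r := d_r^2\,\eta_r$, where $d_r(y) := r - |y-c_r|$ is the distance from $y$ to $\partial\O_r$, and $\eta_r$ is a radial cutoff centred at $x_0$ with $\eta_r\equiv 1$ on $\B_{r/4}(x_0)$ and supported inside $\B_{r/2}(x_0)$. Since $|c_r - x_0| = r$, the support of $\eta_r$ excludes the centre $c_r$ (the sole point where $d_r$ fails to be smooth), giving $\phi_r \in C^\infty(\overline{\O_r})$; the factor $d_r^2$ forces $\phi_r = 0$ and $\D\phi_r = 0$ on $\partial\O_r$, placing $\phi_r$ in $C^2_0(\overline{\O_r})$. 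A direct Leibniz computation then shows that at any boundary point $x \in \partial\O_r$ lying in the interior of $\{\eta_r=1\}$ one has
\[
\phi_r(x) = 0, \quad \D\phi_r(x) = 0, \quad \D^2\phi_r(x) = 2\,\nu(x) \otimes \nu(x),
\]
where $\nu(x) := (x-c_r)/r$ is the outward unit normal to $\partial\O_r$ at $x$.

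The decisive step is to control the argmax set $\O_r(u) := \mathrm{argmax}_{\overline{\O_r}} F$. For small $r$, continuity of $\D F$ ensures $\D F \ne 0$ on $\overline{\O_r}$, so no interior maxima occur and $\O_r(u) \subset \partial \O_r$; the Lagrange multiplier condition at each $x\in \O_r(u)$ then reads $\D F(x) = \mu(x)\,\nu(x)$ with $\mu(x) \geq 0$. Since $\O_r(u) \subset \overline{\B_{2r}(x_0)}$, we have $x \to x_0$ uniformly on $\O_r(u)$ as $r\to 0$, and continuity of $\D F$ forces $\mu(x) \to |\D F(x_0)|$ and $\nu(x)\to \nu$ uniformly on $\O_r(u)$. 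The identity $|x - x_0| = r\,|\nu(x)-\nu|$ (which follows from $x = c_r + r\,\nu(x)$) therefore gives $|x-x_0|/r \to 0$ uniformly, so for all sufficiently small $r$ the whole set $\O_r(u)$ lies inside $\B_{r/4}(x_0)$, where $\eta_r\equiv 1$. On this set the bracketed expression in \eqref{eqn:maxtotalonargmax} and \eqref{eqn:mintotalonargmax} simplifies, by the boundary formula above, to $2\,\H_\X(\J^2u)(x) : \nu(x) \otimes \nu(x)$, which converges uniformly on $\O_r(u)$ as $r\to 0$ to the constant $2\,\H_\X(\J^2u)(x_0) : \nu \otimes \nu$.

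Invoking both \eqref{eqn:maxtotalonargmax} (with $\phi := \phi_r$) and \eqref{eqn:mintotalonargmax} and passing to the limit $r\to 0^+$ therefore sandwiches $\H_\X(\J^2u)(x_0) : \nu \otimes \nu$ between $0$ and $0$, forcing equality; multiplying through by $|\D F(x_0)|^2 > 0$ then yields $\A^2_\infty u(x_0) = 0$. The main obstacle I anticipate is the geometric bookkeeping in the third paragraph: one must show that $\O_r(u)$ is attracted to $x_0$ on the fine scale $o(r)$ rather than merely $O(r)$, so that $\eta_r \equiv 1$ on $\O_r(u)$ for all small enough $r$ and the simplified boundary formula for $\D^2 \phi_r$ holds without any ``transition-region'' contamination coming from $\D \eta_r$. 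The remaining ingredients—verifying $\phi_r \in C^2_0(\overline{\O_r})$ and computing its boundary Hessian—are routine calculations on the smooth sphere $\partial \O_r$, made clean precisely because the cutoff $\eta_r$ is designed to isolate all relevant points away from the singular centre $c_r$.
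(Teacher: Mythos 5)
Your argument is correct, and it rests on the same core mechanism as the paper's proof --- Lemma \ref{lem:danskinstheoremforA2infty} applied to radial test functions whose boundary Hessian is rank-one in the normal direction, combined with the alignment of $\D\big(\H(\J^2u)\big)$ with the sphere's normal at constrained maxima --- but the geometric execution is genuinely different. The paper centres the ball $\B_\rho(x)$ at the point $x$ under scrutiny and then tracks the argmax point $x_\rho^\pm$ wherever it lands (interior with vanishing gradient, boundary with vanishing gradient, or boundary with nonzero gradient, the last case handled via the interior sphere condition for the sublevel set $\mathscr{H}(x_\rho^+)$); in every case the sign inequality holds \emph{at} $x_\rho^\pm$, and since $x_\rho^\pm\to x$, continuity of $y\mapsto \H_\X(\J^2u)(y):\D(\H(\J^2u))(y)^{\otimes2}$ closes the argument with no case split on $\D(\H(\J^2u))(x)$ and no cutoff. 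You instead split on whether $\D(\H(\J^2u))(x_0)$ vanishes, and in the nontrivial case you place $x_0$ \emph{on} the boundary of a shrinking ball whose outward normal at $x_0$ is prescribed to be $\nu=\D F(x_0)/|\D F(x_0)|$; the price is the extra bookkeeping you correctly identify --- the cutoff $\eta_r$ to avoid the singular centre $c_r$, and the $o(r)$ concentration estimate $|x-x_0|=r|\nu(x)-\nu|$ ensuring the argmax set sits inside $\{\eta_r\equiv1\}$ --- all of which you carry out soundly (the Lagrange multiplier step $\D F(x)=\mu(x)\nu(x)$, $\mu\ge0$, is the exact analogue of the paper's interior-sphere argument). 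What your variant buys is that the limiting normal direction $\nu$ is fixed in advance rather than recovered from the limit of $(x-x_\rho^+)/\rho$, which makes the final sandwich very transparent; what the paper's variant buys is brevity, since no localisation of the argmax set and no cutoff are needed. One small presentational point: your sandwich implicitly uses that the max and the min in \eqref{eqn:maxtotalonargmax}--\eqref{eqn:mintotalonargmax} are both taken over the \emph{same} set $\O_r(u)$ (the argmax of $\H(\J^2u)$), on which your bracketed expression converges uniformly to a constant; this is indeed what Lemma \ref{lem:danskinstheoremforA2infty} provides, but it is worth stating explicitly since a reader might misread \eqref{eqn:mintotalonargmax} as a statement about an argmin set.
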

%%%%%%%%%%%%%%%%%%%%%%%%%%%%%%%%%%%%%%%%%%%%%%%%%%%%%%%%%%%%%%

The method utilised below is new, and in particular provides a considerably simpler alternative proof of Theorem 14(A) in \cite{Katzourakis-Pryer-2020}, in the case of no lower-order terms.

%%%%%%%%%%%%%%%%%%%%%%%%%%%%%%%%%%%%%%%%%%%%%%%%%%%%%%%%%%%%%%

\begin{proof}[\rm \textbf{Proof of Theorem }\ref{theorem:variationalcharacterisation}]
Fix $x\in\Omega$, $\rho\in(0,\text{dist}(x,\partial\Omega))$. Then, $\overline{\B_\rho(x)}\subseteq\Omega$, i.e. $\B_\rho(x)\Subset\Omega$. Fix also any $\zeta\in C^2[0,1]$ satisfying 
\begin{equation}\label{eqn:conditionsonzeta}
\zeta'(0)=\zeta''(0)=0;\qquad \zeta(1)=\zeta'(1)=0;\qquad\zeta''(1)=1.
\end{equation}
We define $\phi_{x,\rho}\in C^2_0(\overline{\B_\rho(x)})$ by setting 
\begin{equation}
\phi_{x,\rho}(y):=\rho^2\zeta\bigg(\frac{\lvert y-x\rvert}{\rho}\bigg).
\end{equation}
Then, we compute
\[
\left\{\ \ 
\begin{split}
  \D\phi_{x,\rho}(y)=&\ \rho\zeta'\bigg(\frac{\lvert y-x\rvert}{\rho}\bigg)\frac{y-x}{\lvert y-x\rvert},
  \\
  \D^2\phi_{x,\rho}(y)= &\ \zeta''\bigg(\frac{\lvert y-x\rvert}{\rho}\bigg)\frac{y-x}{\lvert y-x\rvert}\otimes\frac{y-x}{\lvert y-x\rvert}
  \\
  &+
  \rho\zeta'\bigg(\frac{\lvert y-x\rvert}{\rho}\bigg)\frac{1}{\lvert y-x\rvert}\bigg[\mathbb{I}-\frac{y-x}{\lvert y-x\rvert}\otimes\frac{y-x}{\lvert y-x\rvert}\bigg].
  \end{split}
  \right.
\]
In view of \eqref{eqn:conditionsonzeta}, $\D\phi_{x,\rho}$ and $\D^2\phi_{x,\rho}$, are continuous on $\overline{\B_\rho(x)}$, and $\phi_{x,\rho}$ as well as $\D\phi_{x,\rho}$ vanish on $\partial\B_\rho(x)$. Further,
\begin{equation}\label{eqn:hessianofphixrhoonboundary}
  \D^2\phi_{x,\rho}(y)=\frac{y-x}{\lvert y-x\rvert}\otimes\frac{y-x}{\lvert y-x\rvert},\ \ \ \text{ for }y\in \partial\B_\rho(x).
\end{equation} 
By Lemma \ref{lem:danskinstheoremforA2infty} for $\O:=\B_\rho(x),\phi:=\phi_{x,\rho}$, we have that there exist $x^\pm\in\overline{\B_\rho(x)}$ (realising the max/min) such that 
\begin{equation}\label{eqn:extremalpoints}
  \begin{cases}
    \bigg(\H_\eta(\J^2u)\phi_{x,\rho}+\H_\p(\J^2u)\cdot\D\phi_{x,\rho}+\H_\X(\J^2u):\D^2\phi_{x,\rho}\bigg)(x_\rho^+)\ge0,\vspace{2mm}\\ 
    \bigg(\H_\eta(\J^2u)\phi_{x,\rho}+\H_\p(\J^2u)\cdot\D\phi_{x,\rho}+\H_\X(\J^2u):\D^2\phi_{x,\rho}\bigg)(x_\rho^-)\le0.
  \end{cases}
\end{equation}
If $x_\rho^+\in\B_\rho(x)$, i.e. if it is an interior maximum of $\H(\J^2u)$ over $\overline{\B_\rho(x)}$, then $\D(\H(\J^2u))(x_\rho^+)=0.$ If $x_\rho^+\in\partial\B_\rho(x)$, then note that 
\begin{equation}\label{eqn:xrhoonboundary}
  \overline{\B_\rho(x)}\subseteq\bigg\{\H(\J^2u)\le\H\left(x_\rho^+,u(x_\rho^+),\D u(x_\rho^+), \D^2u(x_\rho^+)\right)\bigg\}=:\mathscr{H}(x_\rho^+).
\end{equation}
If $\D(\H(\J^2u))(x_\rho^+)=0$, then we have as in the previous case. If $\D(\H(\J^2u))(x_\rho^+)\neq0$, then \eqref{eqn:xrhoonboundary} implies that $\mathscr{H}(x_\rho^+)$ satisfies an interior sphere condition at $x_\rho^+$ and 
\begin{equation}
  \partial\mathscr{H}(x_\rho^+)=\bigg\{\H(\J^2u)=\H\left(x_\rho^+,u(x_\rho^+),\D u(x_\rho^+), \D^2u(x_\rho^+)\right)\bigg\}
\end{equation}
near $x_\rho^+$. Thus the vector $(x-x_\rho)$ is parallel to $ \D(\H(\J^2u))(x_\rho^+)$. Then, \eqref{eqn:extremalpoints} implies (in view of \eqref{eqn:hessianofphixrhoonboundary}) that
\begin{equation}
  \H_\X(\J^2u)(x_\rho^+):\frac{x-x_\rho^+}{\rho}\otimes\frac{x-x_\rho^+}{\rho}\ge0
\end{equation}
which yields, since $(x-x_\rho)$ is parallel to $\D(\H(\J^2u))(x_\rho^+)$, that 
\begin{equation}
  \H_\X(\J^2u)(x_\rho^+):\D\!\left(\H(\J^2u)\right)(x_\rho^+)\otimes\D\!\left(\H(\J^2u)\right)(x_\rho^+)\ge0,
\end{equation}
and the above is also true when $x_\rho^+$ is interior or $\D\!\left(\H(\J^2u)\right)(x_\rho^+)=0$ and $x_\rho^+\in\B_\rho(x)$. Arguing similarly for $x_\rho^-$, we have
\begin{equation}
  \H_\X(\J^2u)(x_\rho^-):\D\!\left(\H(\J^2u)\right)(x_\rho^-)\otimes\D\!\left(\H(\J^2u)\right)(x_\rho^-)\le0.
\end{equation} 
We conclude by letting $\rho\to0^+$.
\end{proof}

%%%%%%%%%%%%%%%%%%%%%%%%%%%%%%%%%%%%%%%%%%%%%%%%%%%%%%%%%%%%%%
\smallskip

\section{Existence of $\mathcal{D}$-solutions for the Dirichlet Problem for $\A^2_\infty u=0$}\label{section:existenceofDsolutions}

In this section we focus on the third-order fully nonlinear PDE \eqref{eqn:A2inftensor}. We study the (first-order) Dirichlet problem for this equation on bounded domains, disregarding its connections to the variational problem for \eqref{eqn:fullfunctional}. We impose rather weak assumptions on $\H$, which may not suffice to guarantee the existence of even global minimisers of \eqref{eqn:fullfunctional}, as it may not be weakly* lower semicontinuous. 

Recall that the operator $\A^2_\infty$ is of third-order, yet the natural regularity class for our variational problem is $W^{2,\infty}(\Omega)$. As shown in \cite{Katzourakis-Pryer-2020}, solutions (in general) to \eqref{eqn:A2inftensor} cannot be classical in $C^3(\Omega)$, not even when $n=1$, there are no lower-order terms, and we restrict our attention to minimising solutions! In particular, for any $a,b,A,B,A',B'\in\R$ with $a<b$, there exists a unique (absolute and global) minimiser of $u \mapsto \|u''\|_{L^\infty(\Omega)}$ satisfying
\[
u(a)=A,u'(a)=A', \ \ u(b)=B,u'(b)=B', 
\]
but unless the endpoint data $A,B,A',B'$ can be interpolated by a quadratic polynomial (in which case this is the absolute minimiser), in all other cases the absolute minimiser is piecewise $C^2$ with exactly one jump discontinuity point in the second derivative. Hence, following \cite{Katzourakis-Pryer-2020}, we also consider generalised $\mathcal{D}$-solutions to \eqref{eqn:A2inftensor}. This concept however applies to the expanded form \eqref{expanded-PDE} of the PDE \eqref{eqn:A2inftensor}, namely
\[
\mathcal{A}_\infty(\J^2u,\D^3u)=0 \ \ \text{ in }\Omega,
\]
which in view of \eqref{Aronsson-operator}, can be written as
\begin{equation}
\label{expanded-PDE-2}
 \H_\X(\J^2u): \! \bigg(\H_x(\J^2u)+\H_\eta(\J^2u)\D u+\H_\p(\J^2u)\D^2u+\H_\X(\J^2u):\D^3u\bigg)^{\!\! \otimes2}=0 \ \ \text{ in }\Omega.
\end{equation}
The two forms of the PDE \eqref{expanded-PDE-2} and \eqref{eqn:A2inftensor} are of course equivalent when $u\in C^3(\Omega)$, but the expanded form is the appropriate one to define $\mathcal{D}$-solutions, and our method of proof is inspired by the equivalence between the two. The idea is if for a function $u\in C^3(\Omega)$ we have $\H(\J^2u)\equiv C$ on $\Omega$ for some $C\in\R$, then $\A^2_\infty u =0$ in $\Omega$. The main technical obstacle is then to show that if we have merely that $u\in W_g^{2,\infty}(\Omega)$, which is the natural regularity class, then $\mathcal{A}_\infty(\J^2u,\D^3u)=0$ in the $\mathcal{D}$-sense in $\Omega$.

This section is structured as follows. We first prove the existence of infinitely many strong a.e.\ solutions in $W_g^{2,\infty}(\Omega)$ to the Dirichlet problem for the second-order implicit PDE
\begin{equation}
  \H(\J^2u(x))=C \ \text{ for a.e. }x\in\Omega,
\end{equation}
for some compatible $C>0$ large enough, understood as an \enquote{energy level} determined on the boundary data $g\in W^{2,\infty}(\Omega)$. For this we utilise the results of \cite{Dacorogna-Marcellini-1999} on implicit fully nonlinear PDE, which rely on the Baire Category method. We then establish that all strong solutions $u\in W^{2,\infty}_g(\Omega)$ to $\H(\J^2u)=C$ a.e. on $\Omega$ are $\mathcal{D}$-solutions to the Dirichlet problem for $\mathcal{A}_\infty(\J^2u,\D^3u)=0$ on $\Omega$. To this end, we utilise some basic general machinery of $\mathcal{D}$-solutions established in \cite{Katzourakis-2017-Dsols}. As a result, we provide a proof which is not only new even in the special setting of \cite{Katzourakis-Pryer-2020} in which no lower-order terms are considered, but also considerably shorter than the ``first principles" proof provided therein.

We begin with the existence of solutions to our implicit second-order PDE.

%%%%%%%%%%%%%%%%%%%%%%%%%%%%%%%%%%%%%%%%%%%%%%%%%%%%%%%%%%%%%%

\begin{lem}[Solvability of $\H(\J^2u)=C$]\label{lem:existenceofsolutionstoH=C}
Suppose that $\H:\Omega\times\R\times\R^n\times\R^{n^{\otimes2}}_s \to\R$ satisfies
$$
\H(x,\eta,\mathrm p, \X)=h(x,\eta,\mathrm p, \X^\top\! \X),
$$ 
for some $h\in C^1\big(\Omega\times \R\times\R^n\times\R^{n^{\otimes2}}_s\big)$. We further assume that, for any ${(x,\eta,\mathrm p)\in\Omega\times\R\times\R^n}$, $h(x,\eta,\mathrm p,\cdot)$ is strictly increasing along the direction of the identity matrix, namely the function $t\mapsto h(x,\eta,\mathrm p,t\mathbb{I})$ is strictly increasing on $\R$. Further, we assume there exists $\delta_0>0$ such that 
\begin{equation}
\label{5.3}
\underset {\Omega \times \R \times \R^n}{\sup} h \big( \cdot,\cdot,\cdot, \delta_0 \mathbb{I}\big) <\infty.
\end{equation}
Then, for any $C>0$ selected such that
\begin{equation}
\label{5.4}
C \geq \max\left\{ \underset {\Omega \times \R \times \R^n}{\sup} h \big( \cdot,\cdot,\cdot, \delta_0 \mathbb{I}\big)\, ,\, \underset {\Omega }{\mathrm{ess}\sup}\, h\Big(\J^1 g, \Big( 1  +  \|\D^2g\|^2_{L^\infty(\Omega)}\Big)\mathbb{I}\Big)  \right\},
\end{equation}
the Dirichlet problem
\begin{equation}
\label{5.5A}
  \begin{cases}
\  \H(\J^2u)=C, \ & \text{a.e. in }\Omega, \\
 \ u=g, \D u=\D g, \ & \text{on }\partial\Omega,
  \end{cases}
\end{equation}
has (infinitely many) strong solutions $u\in W^{2,\infty}_g(\Omega)$.
\end{lem}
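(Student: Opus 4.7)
The plan is to translate the boundary value problem \eqref{5.5A} into a differential inclusion for $\D^2u$ and apply the Baire Category method of Dacorogna--Marcellini \cite{Dacorogna-Marcellini-1999}. Exploiting the structural hypothesis $\H(x,\eta,\mathrm p,\X)=h(x,\eta,\mathrm p,\X^\top\X)$, the equation $\H(\J^2u)=C$ is equivalent to the pointwise inclusion
\[
\D^2u(x)\,\in\, \mathcal K\big(x,u(x),\D u(x)\big):=\big\{\X\in\R^{n^{\otimes 2}}_s:\ h(x,\eta,\mathrm p,\X^\top\X)=C\big\},
\]
with associated open ``strict subsolution'' set
\[
\mathcal U(x,\eta,\mathrm p):=\big\{\X\in\R^{n^{\otimes 2}}_s:\ h(x,\eta,\mathrm p,\X^\top\X)<C\big\}.
\]

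First I would verify that the boundary datum $g$ is itself a strict subsolution, namely $\D^2g(x)\in\mathcal U(x,g(x),\D g(x))$ for a.e.\ $x\in\Omega$. This should follow from the choice \eqref{5.4} of $C$, the strict monotonicity of $t\mapsto h(\cdot,t\mathbb I)$, and a spectral comparison between $(\D^2g)^\top\D^2g$ and the radial matrix $(1+\|\D^2g\|_{L^\infty}^2)\mathbb I$, exploiting that the eigenvalues of the former are bounded by $\|\D^2g\|_{L^\infty}^2$. Next, I would set up the complete metric space
\[
\mathscr V:=\overline{\big\{u\in W^{2,\infty}_g(\Omega):\ \D^2u\in \mathcal U(\cdot,u,\D u)\text{ a.e.},\ \|u\|_{W^{2,\infty}(\Omega)}\le R\big\}}^{\,W^{1,\infty}(\Omega)},
\]
for a uniform radius $R>0$ determined by $g$ and the coercivity of $h$ along $\mathbb I$. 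For each $k\in\mathbb N$, the set
\[
\mathscr V_k:=\Big\{u\in\mathscr V:\ \big|\{x\in\Omega:\ h(\J^1u,(\D^2u)^\top\D^2u)<C-\tfrac{1}{k}\}\big|<\tfrac{1}{k}\Big\}
\]
is $W^{1,\infty}$-open in $\mathscr V$, by virtue of the uniform $W^{2,\infty}$-bound and the $C^1$-regularity of $h$, and any $u\in\bigcap_k\mathscr V_k$ is a strong a.e.\ solution of \eqref{5.5A}.

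The main obstacle is establishing the density of each $\mathscr V_k$ in $\mathscr V$, which is the convex integration core of the argument. Given $u\in\mathscr V$ and a small ball $B\Subset\Omega$ on which $\D^2u$ stays strictly inside $\mathcal U$, one must construct $\phi\in W^{2,\infty}_0(B)$ with $\|\phi\|_{W^{1,\infty}}$ arbitrarily small and $\D^2(u+\phi)\in\mathcal K$ on a set of large measure inside $B$. The key ingredient is the existence of rank-one connections inside $\mathcal K$: one must exhibit pairs $\X_1,\X_2\in\mathcal K(x,\eta,\mathrm p)$ with $\X_1-\X_2$ a rank-one symmetric matrix, so that a piecewise-quadratic laminate whose Hessian alternates between $\X_1$ and $\X_2$ is admissible as a second-order $W^{2,\infty}_0$-correction (Hadamard compatibility). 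Such connections arise naturally from the sign-flip symmetry of the level set $\mathcal K$: if $\X\in\mathcal K$ has spectral decomposition $\X=\sum_i\lambda_i v_i\otimes v_i$, then $\X':=\X-2\lambda_j\,v_j\otimes v_j$ also lies in $\mathcal K$ (since $\X'^{\!\top}\X'=\X^\top\X$) and $\X-\X'=2\lambda_j\,v_j\otimes v_j$ has rank one, with nonzero eigenvector $v_j$ serving as the interface normal. Once density is established, the intersection $\bigcap_k\mathscr V_k$ is residual in $\mathscr V$ by the Baire Category theorem, delivering (uncountably) infinitely many strong solutions of \eqref{5.5A} in $W^{2,\infty}_g(\Omega)$.
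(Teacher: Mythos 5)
Your strategy is in the same family as the paper's (a differential inclusion for $\D^2u$ solved by the Baire Category method of \cite{Dacorogna-Marcellini-1999}), but the paper does not build the convex integration machinery from scratch on the full level set of $\H$. It instead reduces \eqref{5.5A} to the prescribed singular value problem $\lambda_i(\D^2u^\top\D^2u)=f(\J^1u)$ for \emph{all} $i=1,\dots,n$, with $f:=h(\cdot,\cdot,\cdot,(\cdot)\mathbb{I})^{-1}(C)$, and invokes the existence theorem of Dacorogna--Marcellini for that problem as a black box; any such solution satisfies $\D^2u^\top\D^2u=f(\J^1u)\mathbb{I}$ a.e., and the ray-monotonicity of $h$ then yields $\H(\J^2u)=h(\J^1u,f(\J^1u)\mathbb{I})=C$. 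Working instead with the full level set $\mathcal K=\{\X:\,h(x,\eta,\p,\X^\top\X)=C\}$, as you propose, creates two genuine gaps. First, your verification that $g$ is a strict subsolution, i.e.\ $h(\J^1g,(\D^2g)^\top\D^2g)<C$, cannot follow from a ``spectral comparison'' with $(1+\|\D^2g\|^2_{L^\infty(\Omega)})\mathbb{I}$: the hypothesis only makes $t\mapsto h(x,\eta,\p,t\mathbb{I})$ increasing \emph{along the identity ray}, not monotone with respect to the Loewner order, so the matrix inequality $(\D^2g)^\top\D^2g\le(1+\|\D^2g\|^2_{L^\infty(\Omega)})\mathbb{I}$ does not transfer to an inequality between the values of $h$. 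In the paper, condition \eqref{5.4} is used only to check the eigenvalue inequality $\lambda_n(\D^2g^\top\D^2g)\le f(\J^1g)-1$, which is a statement about eigenvalues versus $f$ and does follow from the norm bound.

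Second, and more seriously, the density of your sets $\mathscr V_k$ is not established. The sign-flip observation ($\X'=\X-2\lambda_j\,v_j\otimes v_j$ stays in $\mathcal K$ and $\X-\X'$ is rank one) is correct, but it only exhibits rank-one connections between points already on $\mathcal K$. The density step requires the full relaxation (approximation) property: every matrix in the subsolution set $\mathcal U$ must lie in a suitable lamination/rank-one hull of $\mathcal K$, with uniformity in $(x,\eta,\p)$ and with in-approximating open sets. Under the stated hypotheses, where $h$ is controlled only along the identity ray, the sublevel set $\{\X:\,h(x,\eta,\p,\X^\top\X)<C\}$ can be unbounded, disconnected, and not contained in any hull of the level set, so this property is far from automatic and can fail for general admissible $h$. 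The paper's reduction to the prescribed singular value problem is precisely the device that avoids having to prove it, since the relaxation property for $\{\lambda_i(\X^\top\X)=f,\ i=1,\dots,n\}$ is already established in \cite{Dacorogna-Marcellini-1999}. To repair your argument you would either have to prove this relaxation property for your $\mathcal K$ (which the hypotheses do not support), or restrict the target set to $\{\X:\,\X^\top\X=f(x,\eta,\p)\mathbb{I}\}$ --- at which point you have reproduced the paper's proof.
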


%%%%%%%%%%%%%%%%%%%%%%%%%%%%%%%%%%%%%%%%%%%%%%%%%%%%%%%%%%%%%%

\begin{proof}[\rm \textbf{Proof of Lemma }\ref{lem:existenceofsolutionstoH=C}]
Fix $g\in W^{2,\infty}(\Omega)$. For any matrix $\X\in \R^{n^{\otimes 2}}_s$, let $\{\lambda_1(\X)$, ...,$\lambda_n(\X)\}$ denote its eigenvalues in increasing order. Let $f : \Omega \times \R \times \R^n \to \R$ be a continuous function satisfying $f\ge\delta_0$ for some $\delta_0>0$. By \cite[Theorem 7.31, Remark 7.29 \& Corollary 7.34]{Dacorogna-Marcellini-1999}, the following Dirichlet problem involving singular values of the Hessian matrix
\begin{equation}
\label{5.5}
  \ \ \ \begin{cases}
\ \lambda_i(\D^2u^\top \D^2u)=f(\J^1 u), & \text{a.e. on }\Omega, \quad i=1,\ldots,n, \\
  \ u=g, \, \D u=\D g, & \text{on }\partial\Omega,
  \end{cases}
\end{equation}
has (infinitely many) solutions in $W^{2,\infty}_g(\Omega)$, as long as the boundary condition is a ``strict subsolution", in the sense that
\begin{equation}
\label{5.6}
 \lambda_n(\D^2g^\top \D^2g)\le f(\J^1g)-\varepsilon_0 \ \ \text{ a.e. on }\Omega,
\end{equation}
for some $\varepsilon_0>0$. When such solutions exist, by the spectral theorem we have that there exists a measurable map of orthogonal matrices $\O \in L^\infty\big(\Omega;\mathrm O(n,\R)\big)$ such that
\begin{equation}
\label{5.8}
  \begin{aligned}
  \D^2u^\top \D^2u&=\O\begin{bmatrix}\lambda_1(\D^2u^\top \D^2u)& &\mathbb{O}\\ & \ddots & \\\mathbb{O}& &\lambda_n(\D^2u^\top \D^2u)\end{bmatrix}\O^\top  \\ 
  &=f(\J^1u)\O\mathbb{I}\O^\top  \\ 
  &=f(\J^1u)\mathbb{I},
  \end{aligned}
\end{equation}
a.e.\ on $\Omega$. Let $C>0$ be selected as in \eqref{5.4}. It follows that
\[
C \geq h\Big(\J^1 g, \big( 1+\|\D^2g\|^2_{L^\infty(\Omega)}\big)\mathbb{I}\Big) \ \ \text{a.e.\ on }\Omega,
\]
and by the monotonicity assumption on $h$, this implies
\[
h\big(\J^1 g,(\cdot)\mathbb{I}\big)^{-1} (C) \geq  1+\|\D^2g\|^2_{L^\infty(\Omega)}  \ \ \text{a.e.\ on }\Omega.
\]
Therefore, by selecting 
\begin{equation}
\label{5.9}
  f(x,\eta,\mathrm p) := h(x,\eta,\mathrm p,(\cdot)\mathbb{I})^{-1}(C),
\end{equation}
we obtain
\[
  f(\J^1g)\ge1+\lVert\D^2g\rVert_{L^\infty(\Omega)}^2
\]
pointwise on $\Omega$. Again by \eqref{5.3}-\eqref{5.4}, it follows that there exists $\delta_0>0$ such that
\[
C \geq h \big( x,\eta, \mathrm p, \delta_0 \mathbb{I}\big).
\]
for all $(x,\eta, \mathrm p) \in \Omega \times \R \times \R^n$. This yields that
\[
f(x,\eta,\mathrm p) = h(x,\eta,\mathrm p,(\cdot)\mathbb{I})^{-1}(C) \geq \delta_0, \ \ \text{ a.e.\ on }\Omega. 
\]
It follows that the necessary conditions for the solvability of problem \eqref{5.5} are satisfied, as 
\[
  \begin{aligned}
\lambda_n(\D^2g^\top \D^2g)&\le\lVert\D^2g\rVert_{L^\infty(\Omega)}^2 \\
    &=(1+\lVert\D^2g\rVert_{L^\infty(\Omega)}^2)-1 \\
    &\le f(\J^1g)-1,
  \end{aligned}
\]
a.e. on $\Omega$, which is \eqref{5.6} with $\varepsilon_0=1$. We conclude by showing how the solvability of \eqref{5.5} implies the solvability of \eqref{5.5A}. In view of \eqref{5.8}-\eqref{5.9}, we may compute
\[
\begin{split}
\H(\J^2 u ) & = h\big(\J^1u,\D^2u^\top \D^2 u \big)
\\
& = h\big(\J^1u,f(\J^1u)\mathbb{I} \big)
\\
& = h\big(\J^1u,(\cdot)\mathbb{I} \big) \big(f(\J^1u)\big)
\\
& = \Big(h\big(\J^1u,(\cdot)\mathbb{I} \big) \circ h\big(\J^1u,(\cdot)\mathbb{I} \big)^{-1}\Big)(C)
\\
& = C,
\end{split}
\]
a.e.\ on $\Omega$. The conclusion ensues.
\end{proof}

Now we proceed to establish the existence of $\mathcal{D}$-solutions to the Dirichlet problem for the fully nonlinear PDE \eqref{expanded-PDE-2}.

%%%%%%%%%%%%%%%%%%%%%%%%%%%%%%%%%%%%%%%%%%%%%%%%%%%%%%%%%%%%%%

\begin{thm}[Existence of $\mathcal{D}$-solutions]\label{theorem:existenceofDsolutions}
Suppose that $\H:\Omega\times\R\times\R^n\times\R^{n^{\otimes2}}_s \to\R$ satisfies
$$
\H(x,\eta,\mathrm p, \X)=h(x,\eta,\mathrm p,\X^\top\! \X),
$$ 
for some $h\in C^1\big(\Omega\times \R\times\R^n\times\R^{n^{\otimes2}}_s\big)$. We further assume that, for any ${(x,\eta,\mathrm p)\in\Omega\times\R\times\R^n}$, $h(x,\eta,\mathrm p,\cdot)$ is strictly increasing along the direction of the identity matrix, and that there exists $\delta_0>0$ such that 
\[
\underset {\Omega \times \R \times \R^n}{\sup} h \big( \cdot,\cdot,\cdot, \delta_0 \mathbb{I}\big) <\infty.
\]
Then, for any $g\in W^{2,\infty}(\Omega)$, the Dirichlet problem
\begin{equation}
\label{5.10}
  \ \ \ \begin{cases}
\ \mathcal{A}_\infty(\J^2u,\D^3u) =0, & \text{in }\Omega,
 \\
  \ u=g, \, \D u=\D g, & \text{on }\partial\Omega,
  \end{cases}
\end{equation}
has (infinitely many) $\mathcal{D}$-solutions $u\in W^{2,\infty}_g(\Omega)$. In view of Definition \ref{D-solutions} and the expression \eqref{Aronsson-operator} of $\mathcal{A}_\infty$, this means that 
\[
\int_{\overbar{\R}^{n^{\otimes3}}_s} \! \Phi(\bZ) \H_\X(\J^2u) \!: \!\!\bigg(\H_x(\J^2u)+\H_\eta(\J^2u)\D u+\H_\p(\J^2u)\D^2u+\H_\X(\J^2u)\!:\!\bZ\bigg)^{\!\! \otimes2}\d[\mathcal{D}^3u](\bZ) =0 ,
\] 
a.e.\ on $\Omega$, for any third-order diffuse derivative $\mathcal{D}^3u\in\mathscr{Y}(\Omega;\overbar{\R}^{n^{\otimes3}}_s)$, and for any test function $\Phi\in C_c(\overbar{\R}^{n^{\otimes3}}_s)$.
\end{thm}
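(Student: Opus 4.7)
The idea is to reduce the third-order fully nonlinear problem \eqref{5.10} to the implicit second-order PDE solved by Lemma \ref{lem:existenceofsolutionstoH=C}. First I would invoke Lemma \ref{lem:existenceofsolutionstoH=C} to obtain infinitely many strong a.e.\ solutions $u \in W^{2,\infty}_g(\Omega)$ of the auxiliary problem \eqref{5.5A}, for any $C$ as in \eqref{5.4}. The central claim is then that every such $u$ is automatically a $\mathcal{D}$-solution of \eqref{5.10}. Combining Remark \ref{remark2.4} with the tensor-product structure of $\mathcal{A}_\infty$ displayed in \eqref{Aronsson-operator}, this reduces to showing that, for a.e.\ $x\in \Omega$ and every $\bZ \in \mathrm{supp}_*(\mathcal{D}^3 u(x))$, the linear-in-$\bZ$ inner factor
\[
G_k(x,\bZ)\, :=\, \H_{x_k}(\J^2 u)+\H_\eta(\J^2 u)\D_k u+\H_{\p_i}(\J^2 u)\D^2_{ki}u+\H_{\X_{ij}}(\J^2 u)\bZ_{kij}
\]
vanishes for each $k = 1,\dots,n$, since then $\mathcal{A}_\infty(\J^2 u(x),\bZ) = \sum_{k,l}\H_{\X_{kl}}(\J^2 u(x))G_k(x,\bZ)G_l(x,\bZ) = 0$ identically on the reduced support.

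\textbf{Key identity from taking difference quotients of $C$.} Fix $k$ and $h \neq 0$. Since $\H(\J^2 u) \equiv C$ a.e., the tautology $\H(\J^2 u(x+he^k))-\H(\J^2 u(x))=0$ decomposes as $0 = A^h_k(x)+B^h_k(x)$, where $A^h_k$ is the variation in the first three slots of $\H$ with the Hessian frozen at $\D^2 u(x)$, and $B^h_k$ is the variation in the Hessian slot with the first three slots frozen at $(x+he^k, u(x+he^k), \D u(x+he^k))$. Applying the fundamental theorem of calculus in each piece, using $\H \in C^1$, the continuity of $u,\D u$, and Lebesgue differentiation along $e^k$ (needed because $\D^2 u$ is merely in $L^\infty$), gives
\[
\frac{A^h_k(x)}{h}\longrightarrow \H_{x_k}(\J^2 u)+\H_\eta(\J^2 u)\D_k u+\H_{\p_i}(\J^2 u)\D^2_{ki}u \ \ \text{a.e.\ as } h \to 0,
\]
whereas $B^h_k(x)/h = \Psi^{h,k}_{ij}(x)\, \D^{1,h}_k\D^2_{ij}u(x)$, where $\Psi^{h,k}_{ij}(x)$ is the integral average of $\H_{\X_{ij}}$ along the segment joining $\D^2u(x)$ and $\D^2u(x+he^k)$ in the Hessian slot. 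Extracting a subsequence $h_m \to 0$ along which $\D^2 u(\cdot + h_m e^k) \to \D^2 u$ a.e.\ on $\Omega$ (available since $L^1_{\mathrm{loc}}$-continuity of translations combined with $\D^2 u \in L^\infty$ supplies a.e.\ convergent subsequences), continuity of $\H_\X$ and dominated convergence yield $\Psi^{h_m,k}_{ij}(x) \to \H_{\X_{ij}}(\J^2 u(x))$ a.e.

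\textbf{Young measure passage and the main obstacle.} Combining the two pieces, for a.e.\ $x$ one obtains
\[
G_k\big(x,\D^{1,h_m}\D^2 u(x)\big)\, =\, \big[\H_{\X_{ij}}(\J^2 u(x))-\Psi^{h_m,k}_{ij}(x)\big]\,\D^{1,h_m}_k\D^2_{ij}u(x)\, +\, o(1),
\]
as $m\to\infty$. Fix $\bZ \in \mathrm{supp}_*(\mathcal{D}^3 u(x))$: by the very definition of the reduced support, $\bZ \in \R^{n^{\otimes 3}}_s$ is a \emph{finite} accumulation point of $\{\D^{1,h_m}\D^2 u(x)\}_m$ along some sub-subsequence, along which the values are locally bounded. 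On that sub-subsequence, the right-hand side vanishes in the limit (a coefficient converging a.e.\ to $0$ times a bounded factor) and the left-hand side tends to $G_k(x,\bZ)$ by continuity. Hence $G_k(x,\bZ) = 0$ for every $k$, which gives $\mathcal{A}_\infty(\J^2u(x),\bZ) = 0$ on the reduced support of $\mathcal{D}^3 u(x)$, and Remark \ref{remark2.4} completes the proof. The main obstacle is precisely this last Young measure passage: the coefficient $\Psi^{h_m,k}_{ij}$ converges only a.e., not uniformly, so the potentially unbounded part of $\D^{1,h_m}\D^2 u(x)$ (corresponding to the atom at infinity of the compactification) cannot be controlled pointwise. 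The reduced-support formulation of Remark \ref{remark2.4}, inherited from \cite[Proposition 21]{Katzourakis-2017-Dsols}, is precisely tailored to bypass this difficulty by restricting the test to bounded accumulation points, where a.e.\ convergence of the coefficient suffices.
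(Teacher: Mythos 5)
Your proposal is correct and follows the same overall architecture as the paper's proof: invoke Lemma \ref{lem:existenceofsolutionstoH=C} to produce infinitely many strong solutions of $\H(\J^2u)=C$ in $W^{2,\infty}_g(\Omega)$, observe that $\mathcal{A}_\infty(\J^2u,\bZ)$ factors as $\H_\X(\J^2u):G(\bZ)\otimes G(\bZ)$ with $G$ affine in $\bZ$, show that $G$ vanishes on the reduced support of every diffuse third derivative, and conclude via Remark \ref{remark2.4}. The one place where you genuinely diverge is the middle step: the paper obtains the vanishing of the linear factor $\mathscr{H}_\infty(\J^2u,\cdot)$ on $\mathrm{supp}_*(\mathcal{D}^3u(x))$ by citing the general ``differentiation of equations in the $\mathcal{D}$-sense'' result \cite[Theorem 30]{Katzourakis-2017-Dsols} as a black box, whereas you re-derive it by hand: you split the difference quotient of the constant identity $\H(\J^2u)\equiv C$ into a lower-order part (handled by Rademacher differentiability of $\D u$ and the chain rule) and a Hessian part written via the fundamental theorem of calculus with the averaged coefficient $\Psi^{h,k}_{ij}$, then pass to the limit on the reduced support, where finiteness of the accumulation point $\bZ$ compensates for the merely a.e.\ convergence $\Psi^{h_m,k}_{ij}\to\H_{\X_{ij}}(\J^2u)$. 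This is essentially the ``first principles'' route of \cite{Katzourakis-Pryer-2020} that the paper deliberately set out to streamline away; your version buys self-containedness and makes transparent exactly why the reduced-support formulation is needed (the possibly escaping part of the difference quotients cannot be controlled by a pointwise-only convergent coefficient), at the cost of length and of having to justify carefully that, for a.e.\ $x$, every point of $\mathrm{supp}_*(\mathcal{D}^3u(x))$ is a finite accumulation point of $\D^{1,h_m}\D^2u(x)$ --- this is not ``the very definition'' of the reduced support as you write, but a separate (standard) fact which you do correctly attribute to \cite[Proposition 21]{Katzourakis-2017-Dsols}; if you wanted the argument fully self-contained you would need to prove it via a countable family of test functions supported off closed sets. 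With that caveat, the argument is sound.
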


%%%%%%%%%%%%%%%%%%%%%%%%%%%%%%%%%%%%%%%%%%%%%%%%%%%%%%%%%%%%%%

\begin{proof}[\rm \textbf{Proof of Theorem }\ref{theorem:existenceofDsolutions}]
Since the assumptions of Lemma \ref{lem:existenceofsolutionstoH=C} are satisfied, we may select $C>0$ as in \eqref{5.4}, to guarantee that the Dirichlet problem \eqref{5.5A} for the fully nonlinear PDE $\H(\J^2u)=C$ is solvable in the strong sense in $W^{2,\infty}_g(\Omega)$. By the result \cite[Theorem 30, p. 665]{Katzourakis-2017-Dsols} on the differentiation of equations in the $\mathcal{D}$-sense, $u$ solves 
\begin{equation}
  \sum_{i,j=1}^n\bigg(\H_{x_k}(\J^2u)+\H_\eta(\J^2u)\D_k u+\H_{\p_j}(\J^2u)\D^2_{jk}u+\H_{\X_{ij}}(\J^2u)\D^3_{kij}u\bigg)=0 \ \ \text{ in }\Omega,
\end{equation}
in the $\mathcal{D}$-sense, for any $k=1,\ldots,n$. We define the map
$ \mathscr{L}_\infty: \Omega\times\R\times\R^n\times\R^{n^{\otimes2}}_s \times\R^{n^{\otimes3}}_s\to\R^n$ by setting 
\begin{equation}
\label{5.12}
  \begin{aligned}
  \mathscr{H}_\infty(x,\eta,\mathrm p , \X ,\bZ):=\sum_{i,j,k=1}^n\bigg(&\H_{x_k} (x,\eta,\mathrm p , \X )+\H_\eta(x,\eta,\mathrm p , \X )\mathrm \mathrm p_k\\&+\H_{\p_j}(x,\eta,\mathrm p , \X )\X_{jk}+\H_{\X_{ij}}(x,\eta,\mathrm p , \X )\bZ_{kij}\bigg)e^k.
  \end{aligned}
\end{equation}
Then, in view of Remark \ref{remark2.4}, we have that any third-order diffuse derivative $\mathcal{D}^3u \in\mathscr{Y}(\Omega;\overbar{\R}^{n^{\otimes3}}_s)$ of $u$ satisfies the inclusion 
\[
  \mathrm{supp}_*(\mathcal{D}^3u(x))\subseteq\big\{\mathscr{H}_\infty(\J^2u(x),\cdot)=0\big\},
\]
for a.e.\ $x\in\Omega$. Note now that the definitions \eqref{Aronsson-operator} and \eqref{5.12} imply that
\[
\mathcal{A}_\infty(\J^2u ,\bZ) = \H_\X(\J^2u) : \mathscr{H}_\infty(\J^2u ,\bZ) \otimes \mathscr{H}_\infty(\J^2u ,\bZ),
\]
a.e.\ on $\Omega$. Therefore, we have 
\[
\big\{\mathscr{H}_\infty(\J^2u(x),\cdot)=0\big\}\subseteq\big\{\mathcal{A}_\infty(\J^2u(x),\cdot)=0\big\} 
\]
for a.e.\ $x\in\Omega$, which by the above yields that 
\[
  \mathrm{supp}_*(\mathcal{D}^3u(x))\subseteq\big\{\mathcal{A}_\infty(\J^2u(x),\cdot)=0\big\},
\]
for a.e.\ $x\in\Omega$. Therefore, by Remark \ref{remark2.4} again, it follows that $u$ is a $\mathcal{D}$-solution to $\mathcal{A}_\infty(\J^2u,\D^3u)=0$ in $\Omega$. The conclusion ensues.
\end{proof}

%%%%%%%%%%%%%%%%%%%%%%%%%%%%%%%%%%%%%%%%%%%%%%%%%%%%%%%%%%%%%%

%\vskip.5in

\bibliographystyle{amsplain}

\end{document}